\documentclass[12pt]{amsart}
\usepackage{amsmath,amsthm,amsfonts,latexsym,amssymb,amscd,color,tikz}
\usepackage{chemarr}
\usepackage{graphicx}
\pagestyle{headings}

\setlength{\textwidth}{36true pc}
\setlength{\headheight}{8true pt} 
\setlength{\oddsidemargin}{0 truept}
\setlength{\evensidemargin}{0 truept}
\setlength{\textheight}{572true pt}
%
%
\newtheorem{thm}{Theorem}[section]
\newtheorem{cor}[thm]{Corollary}
\newtheorem{lm}[thm]{Lemma}

\newtheorem{clm}[thm]{Claim}

\newtheorem*{clm*}{Claim}
\theoremstyle{definition}

\newtheorem{remrk}[thm]{Remark}

\numberwithin{equation}{section}

\newcommand{\al}[1]{\mathbf{#1}}

\DeclareMathOperator{\Inj}{Inj}
\DeclareMathOperator{\id}{\textrm{id}}     

\newcommand{\wec}[1]{{\mathbf{#1}}}  

\newcommand{\clo}[1]{\mathsf{#1}}
\DeclareMathOperator{\dom}{dom}
\DeclareMathOperator{\supp}{supp}

\DeclareMathOperator{\Sym}{Sym}
\DeclareMathOperator{\Alt}{Alt}

\newcommand{\LC}{\Lambda}
\newcommand{\ULC}{\Upsilon}
\newcommand{\PP}{\mathcal{P}}

\newcommand{\End}{\textrm{End}}

\newcommand{\CCf}{\mathcal{C}}

\let\phi=\varphi
\let\epsilon=\varepsilon
\let\bar=\overline
\let\hat=\widehat
\let\tilde=\widetilde

\def\upsupseteq{\rotatebox[origin=c]{90}{$\subseteq$}}
\def\upeq{\rotatebox[origin=c]{-90}{$=$}}

\newcommand{\qedsymbdiamond}{\renewcommand{\qedsymbol}{$\diamond$}}



\begin{document}

\title[Ultralocally Closed Clones]%
      {Ultralocally Closed Clones}

\author{Keith A. Kearnes}
\address[Keith A. Kearnes]{Department of Mathematics\\
University of Colorado\\
Boulder, CO 80309-0395\\
USA}
\email{Kearnes@Colorado.EDU}

\author{\'Agnes Szendrei}
\address[\'Agnes Szendrei]{Department of Mathematics\\
University of Colorado\\
Boulder, CO 80309-0395\\
USA}
\email{Szendrei@Colorado.EDU}

\thanks{This material is based upon work supported by
the National Science Foundation grant no.\ DMS 1500254,
the Hungarian National Foundation for Scientific Research (OTKA)
grant no.\ K115518,
and the National Research, Development and Innovation Fund of
Hungary (NKFI) grant no.\ K128042.}

\subjclass[2010]{Primary: 08A40; Secondary: 03C20}
\keywords{Baker-Pixley Theorem, clone, interpolation,
  local operation, ultrapower}

\begin{abstract}
  Given a clone $\clo{C}$ on a set $A$, we characterize the
  clone of operations on $A$ which are local term operations
  of every ultrapower
  of the algebra $\langle A;\clo{C}\rangle$.
\end{abstract}

\maketitle

\section{Introduction}
The Baker--Pixley Theorem asserts that if a clone $\clo{C}$ on a finite
set $A$ contains a $d$-ary near unanimity operation ($d\ge 3$),
then every operation that
preserves all compatible relations of arity $<d$ of
the algebra $\langle A;\clo{C}\rangle$ 
belongs to $\clo{C}$. This theorem does not
extend in unmodified form to clones on infinite sets.
Rather, the result 
is that
if a clone $\clo{C}$ on an infinite
set $A$ contains a $d$-ary near unanimity operation,
then every operation that
preserves all compatible relations of arity $<d$ of
the algebra $\langle A;\clo{C}\rangle$ 
belongs to the \emph{local closure} of $\clo{C}$.

``Local closure'' is a closure operator on the lattice of clones
on $A$. We denote the local closure of a clone $\clo{C}$
by $\LC_{\omega}(\clo{C})$, where we use capital Lambda
to stand for ``local''. This closure operator is useful
for translating results about clones on finite sets
to locally closed clones on arbitrary sets.

The drawbacks of passing
from a clone to its local closure are that (i) there
are relatively few locally closed clones on any
infinite set, and (ii) the local closure
of a clone is a coarse approximation
to the clone. Regarding (i),
every clone on a finite set is locally closed, but
on an infinite set of cardinality $\nu$ there are
$2^{2^{\nu}}$-many clones, and only $2^{\nu}$-many
are locally closed (see, e.g., \cite[p.~396]{goldstern-pinsker}).
Regarding (ii), the local closure of a simple $R$-module
always agrees with the $\textrm{End}(V)$-module structure
on a vector space $V$. This may be regarded
as a `coarse' approximation
to the $R$-module structure since, for example,
the ring $\textrm{End}(V)$ typically has many nontrivial
idempotents while $R$ need not have any.

In this paper, we introduce a collection
of finer closure operators on clone
lattices, the most interesting of which is
called ``ultralocal closure''. We denote the ultralocal closure
of a clone $\clo{C}$ 
by $\ULC_{\omega}(\clo{C})$, with capital Upsilon
to stand for ``ultralocal''.
The concept of  ultralocal closure is inspired by the work
of Vaggione in \cite{vaggione}.
We shall find that
\begin{itemize}
\item $\clo{C}\subseteq \ULC_{\omega}(\clo{C}) \subseteq \LC_{\omega}(\clo{C})$
  (the ultralocal closure of $\clo{C}$
  is contained in the local closure of $\clo{C}$), 
\item the number of ultralocally closed clones on an infinite
  set of cardinality $\nu$ is large (= $2^{2^{\nu}}$), and
\item $\ULC_{\omega}(\clo{C})$ can replace the use of
  $\LC_{\omega}(\clo{C})$ in some arguments that extend results
  about clones on finite sets to clones on infinite sets
  (e.g., the Baker--Pixley Theorem).
\end{itemize}  

In fact, our work here covers a little more than we have
described so far. Namely, for every set $A$
and every cardinal $\kappa$
we shall define the \emph{$\kappa$-ultraclosure} of a clone $\clo{C}$
on $A$, written $\ULC_{\kappa}(\clo{C})$.
We say a clone is \emph{$\kappa$-ultraclosed} if
$\ULC_{\kappa}(\clo{C}) = \clo{C}$.
It will follow from the definitions
that $\ULC_1(\clo{C})$ is the clone of all operations on $A$ and
\[
\ULC_1(\clo{C})\supseteq \ULC_2(\clo{C})\supseteq \dots
\supseteq \ULC_\omega(\clo{C})\supseteq \ULC_{\omega_1}(\clo{C})\supseteq
\dots \supseteq \clo{C}.
\]
Then, our main results are:
\begin{enumerate}
\item
  A characterization of
  the $\omega$-ultraclosure (i.e., the ultralocal closure)
  of a clone, $\ULC_{\omega}(\clo{C})$
  (Theorem~\ref{thm-char-lambda} and
  Corollary~\ref{cor-char-kappa}).
\item
  A proof, using the characterization theorem,
  of a version of the Baker--Pixley Theorem:
  every clone containing a $d$-ary near unanimity operation ($d\ge3$)
  satisfies $\ULC_d(\clo{C}) = \clo{C}$ (Theorem~\ref{thm-nu}).
  (The original proof of this statement, using different
  arguments and terminology, is due to Vaggione in
  \cite{vaggione}.)
\item
  A proof, using the characterization theorem, that the clone of any simple
  module is ultralocally closed
  (Theorem~\ref{thm-simple-module}).
\item
  We exhibit examples of clones that are, or are not, ultralocally closed
  (Section~\ref{Section6}).
\end{enumerate}

\section{Preliminaries}
\label{Section2}
Throughout this paper, $A$ and $I$ will denote nonempty sets.
By a \emph{clone} we will mean a clone of operations on some set $A$,
that is, a set of finitary operations on $A$ that contains the projection
operations and is closed under superposition.
The largest clone on $A$ is the clone $\clo{O}_A$ of all operations on $A$.

Fix $A$ and $I$.
For any ultrafilter $\mathcal{U}$ on $I$, the ultrapower $A^I/\mathcal{U}$
of $A$ consists of the equivalence classes
$\wec{a}/\mathcal{U}$ ($\wec{a}=(a_i)_{i\in I}\in A^I$)
of the equivalence relation $\equiv_{\mathcal{U}}$ on $A^I$ defined by
\[
(a_i)_{i\in I}\equiv_{\mathcal{U}}(b_i)_{i\in I}
\qquad\text{if and only if}\qquad
\{i\in I:a_i=b_i\}\in\mathcal{U}.
\]
The diagonal map
$\delta\colon A\to A^I/\mathcal{U}$, $a\mapsto(a)_{i\in I}/\mathcal{U}$
is injective,
therefore $A^I/\mathcal{U}$ may be
viewed as an
extension of $A$, via $\delta$.

For every $n$-ary operation $f\colon A^n\to A$ on $A$, and for every
ultrafilter $\mathcal{U}$ on some set $I$, $f$ has an extension
$f_{\mathcal{U}}$ to the ultrapower $A^I/{\mathcal U}$ of $A$,
defined as follows:
\[
f_{\mathcal{U}}(\wec{a}_1/{\mathcal U},\ldots,\wec{a}_n/{\mathcal U})=
f(\wec{a}_1,\ldots,\wec{a}_n)/{\mathcal U}
\quad\text{for all $\wec{a_1},\dots,\wec{a}_n\in A^I$},
\]
where $f$ on the right hand side acts coordinatewise on elements of $A^I$.
For any clone $\clo{C}$ on $A$ and ultrafilter $\mathcal{U}$ in $I$, we get
a clone $\clo{C}_{\mathcal{U}}$ on $A^I/\mathcal{U}$ by defining 
\[
\clo{C}_\mathcal{U}:=\{t_{\mathcal{U}}:t\in\clo{C}\}.
\]
This is the clone of the ultrapower $\langle A;\clo{C}\rangle^I/\mathcal{U}$
of the algebra $\langle A;\clo{C}\rangle$. The diagonal map
$\delta\colon A\to A^I/\mathcal{U}$ is an elementary embedding
$\langle A;\clo{C}\rangle\to\langle A;\clo{C}\rangle^I/\mathcal{U}
=\langle A^I/\mathcal{U};\clo{C}_{\mathcal{U}}\rangle$, therefore the algebra
$\langle A;\clo{C}\rangle^I/\mathcal{U}
=\langle A^I/\mathcal{U};\clo{C}_{\mathcal{U}}\rangle$
may be viewed as an elementary
extension of $\langle A;\clo{C}\rangle$.

Let $f$ be an $n$-ary operation on $A$
and let
$\clo{C}$ be an arbitrary clone on $A$.
Furthermore, let $\kappa>0$ and $\lambda$ be cardinals.
We say that $f$ is 
\emph{$\lambda$-interpolable by $\clo{C}$},
if whenever $S\subseteq A^n\,\bigl(=\dom(f)\bigr)$
satisfies $|S|\le\lambda$, there is some $n$-ary $t\in\clo{C}$
such that $f|_S=t|_S$. (See Figure~\ref{fig-k-int} for the case when
$\lambda$ is finite.)
We define the \emph{$\kappa$-closure}, $\LC_{\kappa}(\clo{C})$,
of $\clo{C}$ to consist of all operations on $A$
that are $\lambda$-interpolable by $\clo{C}$ for every $\lambda<\kappa$.
(Notice the strict $<$ here!)
The clone $\clo{C}$ is called \emph{$\kappa$-closed} if
$\clo{C}=\LC_{\kappa}(\clo{C})$.
In the special case $\kappa=\omega$,
the $\omega$-closure $\LC_{\omega}(\clo{C})$
of $\clo{C}$ is called the \emph{local closure} of $\clo{C}$, and 
$\clo{C}$ is called \emph{locally closed} if $C=\LC_{\omega}(\clo{C})$.

\begin{figure}[h]
\setlength{\unitlength}{1truemm}
\begin{picture}(150,40)
\put(53,0){%
\begin{tikzpicture}[scale=1.2]
\draw[fill] (1.6,1.7) circle (.05);
\node at (1.8,1.7) {{$s_0$}};
\draw[fill] (1.3,1.2) circle (.05);
\node at (1.1,1.2) {{$s_1$}};
\draw[fill] (1.7,.9) circle (.05);
\node at (1.7,.7) {{$s_2$}};
\draw[fill] (2.5,.9) circle (.05);
\node at (2.5,.7) {{$s_{\lambda-1}$}};
\node at (2.12,.9) {{\dots}};
\draw[line width=1.2pt] (0,0) -- (3,0) -- (3,2) -- (0,2) -- cycle;
\node at (3.3,2.1) {$A^n$};
\node at (1.5,-.4) {$f(s_i)=t(s_i)$ for all $s_i\in S$};
\end{tikzpicture}
}
\end{picture}
\caption{$f$ is $\lambda$-interpolable ($\lambda<\omega$)}
\label{fig-k-int}
\end{figure}
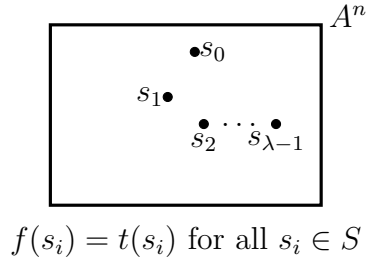

For $f$, $\clo{C}$, and $\kappa$, $\lambda$ as before, we will say that
$f$ is \emph{$\lambda$-ultrainterpolable by $\clo{C}$},
if $f_{\mathcal{U}}$ is $\lambda$-interpolable by $\clo{C}_{\mathcal{U}}$
for every ultrafilter $\mathcal{U}$ on any set $I$.
That is, $f$ is $\lambda$-ultrainterpolable by $\clo{C}$, if
for any ultrafilter $\mathcal{U}$ on any set $I$, we have that
whenever $S\subseteq (A^I/{\mathcal U})^n$
satisfies $|S|\le\lambda$, there is some $n$-ary $t\in\clo{C}$
such that $(f_{\mathcal{U}})|_S=(t_{\mathcal{U}})|_S$.
We define the \emph{$\kappa$-ultraclosure}, $\ULC_{\kappa}(\clo{C})$,
of $\clo{C}$ to consist of all operations on $A$
that are $\lambda$-ultrainterpolable by $\clo{C}$ for every $\lambda<\kappa$.
(Strict $<$ here, too!)
The clone $\clo{C}$ is called \emph{$\kappa$-ultraclosed} if
$\clo{C}=\ULC_{\kappa}(\clo{C})$.
In the special case $\kappa=\omega$,
the $\omega$-ultraclosure $\ULC_{\omega}(\clo{C})$
of $\clo{C}$ is called the \emph{ultralocal closure} of $\clo{C}$, and 
$\clo{C}$ is called \emph{ultralocally closed} if
$\clo{C}=\ULC_{\omega}(\clo{C})$.

If $f$ is $\lambda$-ultrainterpolable by $\clo{C}$, then
$f$ is $\lambda$-interpolable by $\clo{C}$, for the following reason.
Assume that $f$ is $\lambda$-ultrainterpolable by $\clo{C}$, and
that $\mathcal{U}$ is a principal
ultrafilter on some set $I$ with $\{u\}\in\mathcal{U}$ ($u\in I$).
Since $f$ is $\lambda$-ultrainterpolable by $\clo{C}$,
$f_{\mathcal{U}}$ is $\lambda$-interpolable by $\clo{C}_{\mathcal{U}}$.
Since $\mathcal{U}$ is generated by $\{u\}$,
the equivalence relation $\equiv_{\mathcal{U}}$
is the kernel of the projection $A^I\to A$ onto the $u$-th coordinate, so
$\delta\colon A\to A^I/\mathcal{U}$ is a bijection.
Therefore, up to renaming elements of the base sets via
$\delta$, $\clo{C}_{\mathcal{U}}$ and $\clo{C}$ are the same
clone, and $f_{\mathcal{U}}$ and $f$ are the same operation.
Hence, $f$ is $\lambda$-interpolable by~$\clo{C}$.

The argument just given
proves statement (1) of the lemma below. Statement (2) is an immediate
consequence of the definitions. Statement (3) follows from the fact that
for a finite set $A$, the elementary
embedding $\delta\colon A\to A^I/\mathcal{U}$
is an isomorphism for any ultrafilter $\mathcal{U}$ on any set $I$.

\begin{lm}
  \label{lm-incl}
  For arbitrary clone $\clo{C}$ on a set $A$, and
  for any cardinals $\mu,\nu\,(>0)$,
  \begin{enumerate}
  \item[{\rm(1)}]
    $\clo{C}\subseteq \ULC_\mu(\clo{C})\subseteq \LC_\mu(\clo{C})$, and
  \item[{\rm(2)}]
    $\clo{C}\subseteq\ULC_\nu(\clo{C})\subseteq\ULC_\mu(\clo{C})$
    if $\mu\le\nu$.
  \item[{\rm(3)}]
    For finite $A$,
    \begin{itemize}
    \item
      $\ULC_\mu(\clo{C})=\LC_\mu(\clo{C})$, moreover,
    \item
      $\clo{C}=\ULC_\mu(\clo{C})=\LC_\mu(\clo{C})$ if $\mu$ is infinite.
    \end{itemize}
  \end{enumerate}
\end{lm}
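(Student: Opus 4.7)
The plan is to dispatch the three parts in order, essentially following the roadmap sketched by the authors in the paragraph preceding the lemma statement.

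For part (1), the inclusion $\clo{C}\subseteq\ULC_\mu(\clo{C})$ is immediate: given $f\in\clo{C}$, for any ultrafilter $\mathcal{U}$ we have $f_{\mathcal{U}}\in\clo{C}_{\mathcal{U}}$ by the very definition of $\clo{C}_{\mathcal{U}}$, so taking $t:=f$ provides an exact interpolant of $f_{\mathcal{U}}$ on all of $(A^I/\mathcal{U})^n$, regardless of $S$ and $\lambda$. The inclusion $\ULC_\mu(\clo{C})\subseteq\LC_\mu(\clo{C})$ is the content of the principal-ultrafilter argument given just above the lemma: I would restrict the defining condition of $\ULC_\mu$ to a principal ultrafilter $\mathcal{U}$ on some $I$ with $\{u\}\in\mathcal{U}$, obtaining a bijection $\delta\colon A\to A^I/\mathcal{U}$ under which $f$ and $f_{\mathcal{U}}$ coincide and $\clo{C}$ and $\clo{C}_{\mathcal{U}}$ coincide, so that $\lambda$-ultrainterpolability reduces to ordinary $\lambda$-interpolability.

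For part (2), I would simply unfold the definitions. The first inclusion restates part (1). For the second, $\ULC_\kappa(\clo{C})$ is defined by a universal quantifier ranging over $\lambda<\kappa$, so the collection of conditions grows with $\kappa$; hence $\mu\le\nu$ yields $\ULC_\nu(\clo{C})\subseteq\ULC_\mu(\clo{C})$.

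For part (3), the key observation is that when $A$ is finite, the diagonal $\delta\colon A\to A^I/\mathcal{U}$ is a bijection for every ultrafilter $\mathcal{U}$ on every $I$: for any $\wec{a}=(a_i)_{i\in I}\in A^I$, the sets $\{i\in I:a_i=a\}$ ($a\in A$) form a finite partition of $I$, exactly one block of which belongs to $\mathcal{U}$, producing a unique $a\in A$ with $\wec{a}\equiv_{\mathcal{U}}(a)_{i\in I}$. As in the principal-ultrafilter case, this identification collapses $\lambda$-ultrainterpolability to $\lambda$-interpolability, giving $\ULC_\mu(\clo{C})=\LC_\mu(\clo{C})$. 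For the second bullet, when $\mu$ is infinite one has $|A^n|<\mu$ for every $n$; taking $\lambda:=|A^n|$ and $S:=A^n$ in the definition of $\lambda$-interpolability then forces any $n$-ary $f\in\LC_\mu(\clo{C})$ to agree with some $t\in\clo{C}$ on its entire domain, so $f=t\in\clo{C}$. The only step that needs any verification beyond definition-chasing is the finite-ultrapower bijection used in part (3), and that is standard; I do not expect any real obstacle.
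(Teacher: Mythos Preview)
Your proposal is correct and follows exactly the route the paper takes: part~(1) via the principal-ultrafilter argument given just before the lemma, part~(2) directly from the definitions, and part~(3) via the observation that for finite $A$ the diagonal map $\delta$ is a bijection for every ultrafilter. Your added details (the finite-partition argument for surjectivity of $\delta$, and the $|A^n|<\mu$ step for the second bullet) simply flesh out what the paper leaves implicit.
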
  

Statement~(3) of the lemma shows that for clones on finite sets
the closure operators $\ULC_\mu$ ($\mu>0$) are not new.
Therefore our results in the forthcoming sections are interesting only
for clones on infinite sets.

Since every operation $f$ on a set $A$ is $0$-interpolable by any clone
$\clo{C}$ on $A$, we have that $\ULC_1(\clo{C})=\LC_1(\clo{C})=\clo{O}_A$.
Hence, statements (1)--(2) of Lemma~\ref{lm-incl} can be summarized as follows:
\begin{align*}
    \clo{O}_A &{}= \LC_1(\clo{C}) \supseteq \,\LC_2(\clo{C}) \supseteq
    \,\LC_3(\clo{C}) \supseteq\ \ \cdots\ \ \supseteq
    \LC_\omega(\clo{C}) \supseteq \,\LC_{\omega_1}(\clo{C}) \supseteq
    \ \ \cdots\ \ \clo{C}\\
    & \kern28pt \upeq \kern40pt \upsupseteq \kern40pt \upsupseteq
    \kern80pt \upsupseteq \kern45pt \upsupseteq \kern60pt \upeq
    \\
    \clo{O}_A &{}= \ULC_1(\clo{C}) \supseteq \ULC_2(\clo{C}) \supseteq
    \ULC_3(\clo{C}) \supseteq\ \ \cdots\ \ \supseteq
    \ULC_\omega(\clo{C}) \supseteq \ULC_{\omega_1}(\clo{C}) \supseteq
    \ \ \cdots\ \,\phantom{.}\clo{C}.
\end{align*}  

For any cardinal $\kappa>0$,
the property that a clone $\clo{C}$ is $\kappa$-closed can be
rephrased in terms of invariant relations, as stated in Lemma~\ref{lm-invrel}
below. For $\kappa=\omega$ the results of this lemma are due to
Romov, \cite{romov}. The statements carry over from $\kappa=\omega$
to arbitrary cardinals
$\kappa>0$ without any essential changes.

For any set $R$ of (finitary or infinitary) relations on a set $A$,
we will use the notation $\clo{Pol}(R)$ for
the clone consisting of all (finitary) operations on $A$
that preserve every relation in $R$.

\begin{lm}{\rm(cf.~\cite{romov})}
  \label{lm-invrel}
Let $\kappa$ be a nonzero cardinal, $\clo{C}$ a clone on a set $A$, and
let $R$ be a set of relations of arity $<\kappa$ on $A$.
\begin{enumerate}
\item[{\rm(1)}]
  $\clo{Pol}(R)$ is a $\kappa$-closed clone on $A$.
\item[{\rm(2)}]
  If $\clo{C}\subseteq\clo{Pol}(R)$ (that is, if $R$ consists of
  invariant relations of $\clo{C}$), then 
  \[
  \clo{C}
  \subseteq\LC_\kappa(\clo{C})\subseteq
  \clo{Pol}(R).
  \]
\item[{\rm(3)}]
  $\LC_\kappa(\clo{C})=\clo{Pol}(R_{\clo{C}})$ for the set
  $R_{\clo{C}}$ of all invariant relations 
  of arity $<\kappa$ of $\clo{C}$.
\end{enumerate}
\end{lm}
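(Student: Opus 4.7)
The proof follows the standard Romov scheme, adapted from $\kappa=\omega$ to arbitrary $\kappa$. For part (1), I would pick $f\in\LC_\kappa(\clo{Pol}(R))$, a relation $\rho\in R$ of arity $\alpha<\kappa$, and $n$ tuples $r_1,\dots,r_n\in\rho$. The set $S:=\{(r_1(j),\dots,r_n(j)):j<\alpha\}\subseteq A^n$ has cardinality at most $\alpha<\kappa$, so by hypothesis there is some $n$-ary $t\in\clo{Pol}(R)$ with $t|_S=f|_S$. Since $t$ preserves $\rho$, applying it coordinatewise to $r_1,\dots,r_n$ gives a tuple in $\rho$; but that tuple equals $f$ applied coordinatewise to $r_1,\dots,r_n$, so $f$ preserves $\rho$ as well. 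Hence $\clo{Pol}(R)$ is $\kappa$-closed.

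Part (2) is immediate from the observation that $\LC_\kappa$ is monotone: if $\clo{C}\subseteq\clo{D}$, then any operation $\lambda$-interpolable by $\clo{C}$ is trivially $\lambda$-interpolable by $\clo{D}$. Combined with part (1), $\clo{C}\subseteq\clo{Pol}(R)$ yields
\[
\LC_\kappa(\clo{C})\subseteq\LC_\kappa(\clo{Pol}(R))=\clo{Pol}(R).
\]

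For part (3), the inclusion $\LC_\kappa(\clo{C})\subseteq\clo{Pol}(R_{\clo{C}})$ is just part (2) applied to $R=R_\clo{C}$. The substantive direction is $\clo{Pol}(R_\clo{C})\subseteq\LC_\kappa(\clo{C})$. Given an $n$-ary $f\in\clo{Pol}(R_\clo{C})$ and a set $S\subseteq A^n$ with $|S|\le\lambda<\kappa$, enumerate $S=\{s_\beta:\beta<|S|\}$ and define the relation
\[
\rho_S:=\bigl\{(t(s_\beta))_{\beta<|S|}:t\in\clo{C}\text{ $n$-ary}\bigr\}
\]
of arity $|S|<\kappa$ on $A$. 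I verify $\rho_S$ is invariant under $\clo{C}$: if $g\in\clo{C}$ is $m$-ary and $(t_k(s_\beta))_{\beta<|S|}\in\rho_S$ for $k=1,\dots,m$, then applying $g$ coordinatewise gives $\bigl(g(t_1,\dots,t_m)(s_\beta)\bigr)_{\beta<|S|}$, which lies in $\rho_S$ because $g(t_1,\dots,t_m)\in\clo{C}$. Thus $\rho_S\in R_\clo{C}$. Using the $n$ projections, the tuples $(s_{\beta,k})_{\beta<|S|}\in\rho_S$ for $k=1,\dots,n$; since $f$ preserves $\rho_S$, the tuple $(f(s_\beta))_{\beta<|S|}$ belongs to $\rho_S$, which by definition of $\rho_S$ yields some $n$-ary $t\in\clo{C}$ with $t(s_\beta)=f(s_\beta)$ for all $\beta<|S|$, i.e., $f|_S=t|_S$. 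Hence $f\in\LC_\kappa(\clo{C})$.

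The only place one has to be careful is that the constructed relation $\rho_S$ has arity $|S|\le\lambda<\kappa$, so it legitimately lies in $R_\clo{C}$ under the arity restriction imposed by the definition of $\LC_\kappa$; this is precisely why the hypothesis uses strict $<$, and it is the step where the generalization from $\omega$ to arbitrary $\kappa$ needs no new idea beyond allowing infinitary $\rho_S$.
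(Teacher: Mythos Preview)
Your proof is correct and follows the standard Romov argument. The paper does not actually supply a proof of this lemma: it cites Romov for the case $\kappa=\omega$ and remarks that ``the statements carry over from $\kappa=\omega$ to arbitrary cardinals $\kappa>0$ without any essential changes.'' Your write-up is precisely the explicit version of that carry-over, so there is nothing to compare against beyond noting that you have filled in what the paper left implicit.
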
  

Using Lemma~\ref{lm-incl}(1) one can expand the sequence of
inclusions in (2) to 
  \[
  \clo{C}\subseteq\ULC_\kappa(\clo{C})
  \subseteq\LC_\kappa(\clo{C})\subseteq
  \clo{Pol}(R).
  \]
This will be useful for us, because it shows that if a property of clones 
is expressible by the preservation of some invariant relation,
then this property is passed on
from $\clo{C}$ to $\LC_\kappa(\clo{C})$, and hence to
$\ULC_\kappa(\clo{C})$, for large enough $\kappa$.

Next we discuss special cases of Lemma~\ref{lm-invrel}
when $\clo{C}$ is 
an essentially unary clone, a module clone, or a product
clone.
The effect of $\ULC_\kappa$ on unary clones and product clones will be
employed in Section~\ref{Section6} to construct large families of clones on
infinite sets that are not ultralocally closed, while
the effect of $\ULC_\kappa$ on module clones will be used in
Section~\ref{Section5} where we show that the clone of every simple module is
ultralocally closed.

In our first corollary to Lemma~\ref{lm-invrel}
a clone $\clo{C}$ is called \emph{essentially unary} if every operation in
$\clo{C}$ depends on at most one of its variables.

\begin{cor}
  \label{cor-specprop1}
  Let $\clo{C}$ be a clone and $\kappa$ a nonzero cardinal. 
  \begin{enumerate}
  \item[{\rm(1)}]
    If $\clo{C}$ is essentially unary, then so are $\LC_\kappa(\clo{C})$
    and $\ULC_\kappa(\clo{C})$ for every $\kappa\ge4$.
  \item[{\rm(2)}]
    If all unary operations in $\clo{C}$ are injective, then
    $\LC_\kappa(\clo{C})$ and $\ULC_\kappa(\clo{C})$
    have the same property for every $\kappa\ge3$.
  \end{enumerate}  
\end{cor}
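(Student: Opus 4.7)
The plan is to apply the template from the remark after Lemma~\ref{lm-invrel}: express each property as preservation of an invariant relation of arity $<\kappa$, and then invoke Lemma~\ref{lm-invrel}(2) together with Lemma~\ref{lm-incl}(1).

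For (1), I will show that an operation $f$ on $A$ is essentially unary if and only if $f$ preserves the ternary relation
\[
\rho = \{(a,b,c) \in A^3 : a=b \text{ or } a=c\}.
\]
The forward direction is immediate, because if $f(\wec{x}) = g(x_i)$ then membership in $\rho$ of the $i$th column of $(\wec{a},\wec{b},\wec{c})$ is enough to pull the conclusion through. For the converse, assuming $f$ depends on (say) its first and second variables, I will exhibit three tuples whose columns lie in $\rho$ but whose $f$-images do not. I take a pivot $\wec{a} = (u_1, v_2, \ldots, v_n)$, choosing $u_1$ so that $f(u_1, \cdot, \ldots, \cdot)$ is non-constant on $A^{n-1}$ (available from dependence on coordinate $2$) and $(v_2, \ldots, v_n)$ so that $f(\cdot, v_2, \ldots, v_n)$ is non-constant on $A$ (available from dependence on coordinate $1$); then I let $\wec{b}$ differ from $\wec{a}$ only at coordinate $1$ with $f(\wec{b}) \neq f(\wec{a})$, and $\wec{c}$ agree with $\wec{a}$ at coordinate $1$ with $f(\wec{c}) \neq f(\wec{a})$. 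A quick check then confirms that column $1$ lies in $\rho$ because $a_1=c_1$ and column $j\ge 2$ because $a_j=b_j$, whereas the image triple is not in $\rho$ by construction. With this characterization in hand, $\rho$ is an invariant relation of $\clo{C}$ of arity $3 < \kappa$ for every $\kappa \geq 4$, and Lemma~\ref{lm-invrel}(2) forces $\LC_\kappa(\clo{C}) \subseteq \clo{Pol}(\rho)$, so $\LC_\kappa(\clo{C})$ is essentially unary; the same conclusion for $\ULC_\kappa(\clo{C})$ follows via Lemma~\ref{lm-incl}(1).

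Part (2) does not strictly fit the template, since injectivity of a unary operation corresponds to preservation of the binary disequality relation but $\clo{C}$ as a whole need not preserve that relation. I therefore argue directly: for unary $f \in \LC_\kappa(\clo{C})$ with $\kappa \geq 3$, $f$ is $2$-interpolable by $\clo{C}$, so for each $a \neq b$ in $A$ some unary $t \in \clo{C}$ matches $f$ on $\{a,b\}$, and injectivity of $t$ then forces $f(a) \neq f(b)$. The $\ULC_\kappa$ statement follows again from Lemma~\ref{lm-incl}(1). The main obstacle is the converse direction in (1): the more obvious quaternary relation $\{(a,b,c,d) : a=b \text{ or } c=d\}$ also characterizes essential unarity but only yields the conclusion for $\kappa \geq 5$, so the work is in realizing that a single pivot tuple can carry both witnesses of dependence, bringing the arity down to three.
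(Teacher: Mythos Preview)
Your proposal is correct, and both parts are handled soundly.

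For part~(1), you and the paper take the same overall route---reduce to a ternary invariant relation and invoke Lemma~\ref{lm-invrel}(2)---and your relation $\rho=\{(a,b,c):a=b\text{ or }a=c\}$ is the paper's $\rho_3=\{(a,b,c):a=b\text{ or }b=c\}$ up to a coordinate permutation. The difference lies in how the characterization ``essentially unary $\Leftrightarrow$ preserves $\rho$'' is proved. The paper quotes the standard fact that essential unarity is equivalent to preserving the quaternary relation $\pi_4=\{(a,b,c,d):a=b\text{ or }c=d\}$, and then exhibits a primitive-positive definition of $\pi_4$ from $\rho_3$. Your argument is a direct combinatorial construction of a witness triple $(\wec{a},\wec{b},\wec{c})$ from any two essential coordinates; it is self-contained and avoids the detour through $\pi_4$, at the cost of not connecting to the existing literature. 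Both are perfectly valid, and your pivot-tuple construction is clean.

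For part~(2), you actually improve on the paper. The paper writes that ``statement~(2) follows in the same way as statement~(1)'' via the binary disequality relation, but as you correctly point out, the template from Lemma~\ref{lm-invrel}(2) requires $\clo{C}\subseteq\clo{Pol}(\neq)$, and the hypothesis only controls the \emph{unary} part of $\clo{C}$. A concrete obstruction: in the clone of $\langle\mathbb{N};+\rangle$ every unary term operation $x\mapsto nx$ ($n\ge 1$) is injective, yet $+$ itself does not preserve $\neq$ (since $0+2=1+1$). Your direct argument via $2$-interpolability---for each pair $a\neq b$, interpolate $f$ on $\{a,b\}$ by a unary $t\in\clo{C}$, then use injectivity of $t$---sidesteps this issue entirely and proves the statement as written.
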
  

\begin{proof}
  For (1), we use the following easily proved fact.

  \begin{clm}
    \label{clm-essUnary}
    An operation $f$ on a set $A$
    is essentially unary if and only if $f$ preserves the ternary relation
  $\rho_3:=\{(a,b,c)\in A^3:a=b\ \text{or}\ b=c\}$.
  \end{clm}

  \begin{proof}[Proof of Claim~\ref{clm-essUnary}]
    Let $A$ be an arbitrary set.
    It is proved in \cite[Lemma~1.3.1]{pk}
    that an operation $f$ on $A$ is essentially unary
    if and only if $f$ preserves the $4$-ary relation
    $\pi_4:=\{(a,b,c,d)\in A^4:a=b\ \text{or}\ c=d\}$. In other words,
    $\clo{Pol}(\pi_4)$ is the clone of all essentially unary
    operations.

    To prove that the relation $\pi_4$ here can be replaced by $\rho_3$,
    notice that $\clo{Pol}(\rho_3)$ contains all essentially unary operations;
    therefore it suffices to show that
    $\clo{Pol}(\rho_3)\subseteq\clo{Pol}(\pi_4)$.
    This can be done by exhibiting a primitive positive definition for
    $\pi_4$ in terms of $\rho_3$. (See, e.g., \cite[Chapter~2]{pk} for why
    this is sufficient.)

    We claim that the primitive positive formula
    \begin{multline*}
    \Phi(x_0,x_1,x_2,x_3):\equiv\Psi(x_0,x_1,x_2,x_3)\wedge\Psi(x_1,x_0,x_2,x_3)
    \qquad\text{with}\\
    \Psi(x_0,x_1,x_2,x_3):\equiv
    \exists y\,\bigl(\rho_3(x_0,x_1,y)\wedge\rho_3(y,x_2,x_3)\bigr)
    \end{multline*}
    defines $\pi_4$.
    Indeed, it is easy to verify that the relation defined by
    $\Psi(x_0,x_1,x_2,x_3)$ is
    $\{(a,b,c,d)\in A^4:a=b \text{ or } c=d \text{ or } b=c\}$.
    Hence the relation defined by $\Phi(x_0,x_1,x_2,x_3)$ is
    $\{(a,b,c,d)\in A^4:a=b \text{ or } c=d \text{ or } a=b=c\}=\pi_4$.

    This claim is also proved in \cite[Lemma~5.3.2]{bodirsky}.
    \qedsymbdiamond
  \end{proof}
  
  It follows from Claim~\ref{clm-essUnary} that
  if $\clo{C}$ is an essentially unary
  clone, then $\clo{C}\subseteq\clo{Pol}(\rho_3)$. Hence,
  by applying Lemma~\ref{lm-invrel}(2) with $R=\{\rho_3\}$, we get that
  $\clo{C}\subseteq\ULC_\kappa(\clo{C})\subseteq\LC_\kappa(\clo{C})\subseteq
  \clo{Pol}(\rho_3)$ for $\kappa\ge4$.
  This shows that the clone $\LC_\kappa(\clo{C})$ and its subclone,
  $\ULC_\kappa(\clo{C})$,
  are also essentially unary
  if $\kappa\ge4$. The proof of (1) is complete.

  A unary operation $f\colon A\to A$ is injective exactly when it preserves
  the binary ``not equal'' relation $\{(a,b)\in A^2: a\not= b\}$.
  Now, statement~(2) follows in the same way as
  statement~(1).
\end{proof}

\begin{cor}
  \label{cor-specprop2}
  Let $\clo{C}$ be a clone and $\kappa$ a nonzero cardinal. 
    If $\clo{C}$ is the clone of an $R$-module, for some ring $R$,
    with underlying abelian group $\hat{A}=\langle A;+,-,0\rangle$,
    then so are $\LC_\kappa(\clo{C})$ and $\ULC_\kappa(\clo{C})$
    for every $\kappa\ge 4$. 
\end{cor}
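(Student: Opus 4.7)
The plan is to follow the same template as Corollary~\ref{cor-specprop1}: identify a low-arity invariant relation that characterizes the clones of $R$-modules on a fixed abelian group $\hat{A}$, and then apply Lemma~\ref{lm-invrel}(2), together with the remark following it, to trap $\LC_\kappa(\clo{C})$ and $\ULC_\kappa(\clo{C})$ between $\clo{C}$ and the corresponding $\clo{Pol}$-clone.

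The key invariant relation is the ternary graph of addition,
\[
G_+ := \{(a,b,c) \in A^3 : a+b=c\}.
\]
A direct check shows that an $n$-ary operation $f$ preserves $G_+$ if and only if $f$ is a homomorphism of abelian groups $\hat{A}^n \to \hat{A}$. Every term operation of any $R$-module on $\hat{A}$ is such a homomorphism, so $G_+$ is an invariant relation of $\clo{C}$; and $\clo{Pol}(\{G_+\})$ is precisely the clone of all additive maps $\hat{A}^n \to \hat{A}$, which coincides with the clone of the $\End(\hat{A})$-module on $\hat{A}$. Since $\arity(G_+) = 3 < 4 \le \kappa$, Lemma~\ref{lm-invrel}(2), extended via Lemma~\ref{lm-incl}(1) as noted in the remark after the lemma, yields
\[
\clo{C} \;\subseteq\; \ULC_\kappa(\clo{C}) \;\subseteq\; \LC_\kappa(\clo{C}) \;\subseteq\; \clo{Pol}(\{G_+\}).
\]

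Next I would show that any clone $\clo{D}$ with $\clo{C} \subseteq \clo{D} \subseteq \clo{Pol}(\{G_+\})$ is itself the clone of an $R'$-module on $\hat{A}$ for a suitable ring $R'$. The candidate is $R' := \{\sigma \in \clo{D} : \sigma \text{ is unary}\}$. Because $+,-,0 \in \clo{C} \subseteq \clo{D}$ and every member of $\clo{D}$ is additive, $R'$ is closed under composition, pointwise addition, and pointwise negation; thus $R'$ is a subring of $\End(\hat{A})$ containing the image of $R$. Any $n$-ary $f \in \clo{D}$ satisfies, by additivity,
\[
f(x_1,\ldots,x_n) = \sum_{i=1}^{n} f(0,\ldots,x_i,\ldots,0) = \sum_{i=1}^{n} \sigma_i(x_i),
\]
with $\sigma_i(x) := f(0,\ldots,x,\ldots,0) \in \clo{D}$, so $\sigma_i \in R'$ and $f$ is an $R'$-linear term. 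The reverse inclusion is automatic because $+,-,0 \in \clo{D}$ and $R' \subseteq \clo{D}$ by construction. Applying this to $\clo{D} = \LC_\kappa(\clo{C})$ and to $\clo{D} = \ULC_\kappa(\clo{C})$ produces the desired module descriptions, with possibly different coefficient rings for the two closures.

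The only delicate point is verifying that the unary $\sigma_i$ in the additive decomposition genuinely belong to $\clo{D}$: this relies on the constant $0 \in \clo{C} \subseteq \clo{D}$ so that each $\sigma_i$ is a legitimate substitution-instance of $f$ inside the clone $\clo{D}$. Beyond this, the argument reduces to the invocation of Lemma~\ref{lm-invrel}(2) and the elementary identification of preservers of $G_+$ with additive operations, so I foresee no substantive obstacle.
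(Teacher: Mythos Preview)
Your proposal is correct and follows essentially the same approach as the paper: both use the ternary graph of addition as the key invariant relation, invoke Lemma~\ref{lm-invrel}(2) to trap $\ULC_\kappa(\clo{C})$ and $\LC_\kappa(\clo{C})$ between $\clo{C}$ and $\clo{Pol}(\gamma(+))$, and then observe that any clone in this interval is the clone of a module over the subring of $\End(\hat{A})$ given by its unary part. Your write-up simply makes explicit the additive decomposition $f(x_1,\dots,x_n)=\sum_i\sigma_i(x_i)$ that the paper leaves implicit.
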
  

\begin{proof}
  Let ${}_RA$
    be an $R$-module 
  with underlying abelian group $\hat{A}$, and let $\clo{C}$ be the clone
  of term operations of ${}_RA$. It is known (for example, it follows
  from \cite[Proposition~2.1]{szendreiBOOK}) that
  \begin{itemize}
  \item
    the graph of $+$, that is, the ternary relation
  \[
  \gamma(+):=\{(a,b,a+b):a,b\in A\}
  \]
  is preserved by every operation in $\clo{C}$; moreover,
  \item
    the clone $\clo{Pol}\bigl(\gamma(+)\bigr)$ of all operations
    that preserve $\gamma(+)$ coincides with the clone of the module
    ${}_{\End(\widehat{A})}A$, which is $\hat{A}$ as a module over its
    endomorphism ring $\End(\hat{A})$.
  \end{itemize}  
  Consequently, every subclone $\clo{S}$ of the clone of
  ${}_{\End(\hat{A})}A$ such that $\clo{S}$ contains the clone of $\widehat{A}$,
  is the clone of a module ${}_SA$ with underlying abelian group
  $\hat{A}$
  for some subring $S$ of $\End(\hat{A})$; namely, $S$ is
  the ring of all unary operations in $\clo{S}$.
  By Lemma~\ref{lm-invrel}(2),
  each $\ULC_\kappa(\clo{C})$ ($\kappa\ge4$) is one of these clones,
  therefore each $\ULC_\kappa(\clo{C})$ ($\kappa\ge4$) is the clone of a
  module with underlying abelian group $\hat{A}$, as claimed.
\end{proof}

For arbitrary clones $\clo{P}$ on a set $A$ and
$\clo{Q}$ on a set $B$ their \emph{product}, $\clo{P}\times\clo{Q}$,
is the clone on $A\times B$ defined as follows:
for each $0<n<\omega$, the $n$-ary members are the
\emph{product operations}
$g\times h$ where $g$ is an $n$-ary operation in $\clo{P}$ and
$h$ is an $n$-ary operation in $\clo{Q}$.
The product operation $g\times h$ is defined to act coordinatewise on
$A\times B$; that is,
\[
(g\times h)\bigl((a_1,b_1),\dots,(a_n,b_n)\bigr)
=\bigl(g(a_1,\dots,a_n),h(b_1,\dots,b_n)\bigr)
\quad\text{for all $a_i\in A$, $b_i\in B$}.
\]
A clone on $A\times B$ is called a \emph{product clone} if it has the form
$\clo{P}\times\clo{Q}$ for some clones $\clo{P}$ on $A$ and $\clo{Q}$ on
$B$.

\begin{cor}
  \label{cor-specprop3}
  Let $\clo{C}$ be a clone on a set $A\times B$, and let
  $\kappa$ be a nonzero cardinal. 
  If $\clo{C}$ is a  product clone on $A\times B$, then
    so are $\LC_\kappa(\clo{C})$ and $\ULC_\kappa(\clo{C})$
    for every $\kappa\ge 4$.
\end{cor}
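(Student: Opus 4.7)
The plan is to identify binary invariant relations that force operations on $A\times B$ to be product operations, and then verify via a straightforward interpolation argument that the resulting closures themselves split as products of closures.

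Define the equivalence relations $\sigma_A,\sigma_B$ on $A\times B$ by $(a,b)\,\sigma_A\,(a',b')$ iff $a=a'$, and $(a,b)\,\sigma_B\,(a',b')$ iff $b=b'$. An unpacking of the compatibility conditions shows that $\clo{Pol}(\{\sigma_A,\sigma_B\})$ is exactly the full product clone $\clo{O}_A\times\clo{O}_B$: preservation of $\sigma_A$ forces the first coordinate of $f(\bar x)$ to depend only on the first coordinates of the inputs, and symmetrically for $\sigma_B$, so any compatible operation must have the form $g\times h$. Since $\clo{C}=\clo{P}\times\clo{Q}$ preserves both of these binary relations, the expanded form of Lemma~\ref{lm-invrel}(2) gives
\[
\clo{C}\subseteq\ULC_\kappa(\clo{C})\subseteq\LC_\kappa(\clo{C})\subseteq\clo{Pol}(\{\sigma_A,\sigma_B\})
\]
for every $\kappa\ge 4$, so every operation in either closure is of product form.

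What remains is to promote this to the statement that the two closures are themselves product clones. I would prove the stronger identity $\LC_\kappa(\clo{P}\times\clo{Q})=\LC_\kappa(\clo{P})\times\LC_\kappa(\clo{Q})$, together with the analogous one for $\ULC_\kappa$. For the inclusion $\supseteq$, given $n$-ary $g\in\LC_\kappa(\clo{P})$, $h\in\LC_\kappa(\clo{Q})$, and $S\subseteq(A\times B)^n$ with $|S|\le\lambda<\kappa$, project $S$ to $T\subseteq A^n$ and $U\subseteq B^n$, both of size at most $\lambda$; choose $p\in\clo{P}$ and $q\in\clo{Q}$ interpolating $g$ on $T$ and $h$ on $U$; then $p\times q$ interpolates $g\times h$ on $S$. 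For the inclusion $\subseteq$, one already knows each member of the closure is a product $g\times h$, and $g\in\LC_\kappa(\clo{P})$ is recovered by testing interpolation on sets $T\times\{\bar b\}$ with a fixed constant tuple $\bar b\in B^n$ and reading off the first coordinate of the interpolant (and symmetrically for $h$).

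For the ultraclosure version, the same proof works once one records the canonical identification $(A\times B)^I/\mathcal{U}\cong A^I/\mathcal{U}\times B^I/\mathcal{U}$, under which $(\clo{P}\times\clo{Q})_{\mathcal{U}}=\clo{P}_{\mathcal{U}}\times\clo{Q}_{\mathcal{U}}$ and $(g\times h)_{\mathcal{U}}=g_{\mathcal{U}}\times h_{\mathcal{U}}$. Coordinate projections from $S\subseteq((A\times B)^I/\mathcal{U})^n$ still yield two sets of size at most $|S|$, and the product-of-interpolants construction transfers without change. The main (and rather mild) obstacle is the preliminary verification that $\clo{Pol}(\{\sigma_A,\sigma_B\})=\clo{O}_A\times\clo{O}_B$; once this is in hand, neither the finite interpolation nor the ultrapower version presents additional difficulty.
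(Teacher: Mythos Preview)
Your proof is correct, but it follows a different path from the paper's. The paper works with the ternary relation $\gamma(*)$, the graph of the rectangular band operation $(a_1,b_1)*(a_2,b_2)=(a_1,b_2)$, and proves (Claim~\ref{clm-product}) that a clone $\clo{D}$ on $A\times B$ is a product clone precisely when $\clo{D}\subseteq\clo{Pol}(\gamma(*))$ \emph{and} $*\in\clo{D}$. Since $*=p_1^A\times p_2^B$ lies in $\clo{C}$, it automatically lies in every closure of $\clo{C}$, and the invariant-relation argument supplies the containment in $\clo{Pol}(\gamma(*))$; no further computation is needed. You instead use the binary kernels $\sigma_A,\sigma_B$, which only show that every operation in the closure factors as a product---not by itself enough, since for instance the diagonal clone $\{g\times g:g\in\clo{O}_A\}$ on $A\times A$ consists entirely of product operations without being a product clone. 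You correctly recognize this gap and close it by explicitly identifying the closures as $\LC_\kappa(\clo{P})\times\LC_\kappa(\clo{Q})$ and $\ULC_\kappa(\clo{P})\times\ULC_\kappa(\clo{Q})$. That is extra work for the corollary, but it is exactly what the paper later proves separately as Lemma~\ref{lm-product}, so your argument effectively merges the two results into one. As a minor bonus, since your invariant relations are binary rather than ternary, your route would in fact already yield the conclusion for $\kappa\ge 3$.
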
  

\begin{proof}
  Let $*$ denote the binary operation on $A\times B$
  defined as follows:
  \[
  (a_1,b_1)*(a_2,b_2)=(a_1,b_2)
  \qquad\text{for all $a_1,a_2\in A$ and $b_1,b_2\in B$.}
  \]
  This operation is known as the binary \emph{diagonal operation}
  or the \emph{rectangular band operation of the product}
  $A\times B$.
  Notice that $*$ is the product operation
  $p_1^A\times p_2^B\in\clo{O}_A\times\clo{O}_B$ where $p_1^A$ is
  binary projection to the first variable on $A$, and $p_2^B$ is
  binary projection to the second variable on $B$.
  We will also use the graph of the operation $*$, which is the
  following ternary relation:
  \[
  \gamma(*):=
  \{(u,v,u*v)\in(A\times B)^3:
  u,v\in A\times B\}.
  \]  
We will need the following facts.

  \begin{clm}
    \label{clm-product}
    Let $A,B$ be arbitrary sets.
    \begin{enumerate}
    \item[{\rm(1)}]
      The following conditions on an $n$-ary operation
      $f$ on $A\times B$ are equivalent:
      \begin{itemize}
      \item
        $f=f_A\times f_B$ for some
        $n$-ary operations $f_A$ on $A$ and  $f_B$ on $B$;
      \item
        $f$ commutes with $*$;
      \item
        $f$ preserves the graph $\gamma(*)$ of the operation $*$.
      \end{itemize}
   \item[{\rm(2)}]
  A clone $\clo{C}$ on $A\times B$ is a product clone if and only if
  \begin{enumerate}
  \item[{\rm (i)}]
    $\clo{C}\subseteq\clo{Pol}(\gamma(*))$, i.e., every operation in
    $\clo{C}$ commutes with $*$, and
  \item[{\rm(ii)}]
    $*$ is a member of $\clo{C}$.
  \end{enumerate}
    \end{enumerate}
\end{clm}

\begin{proof}[Proof of Claim~\ref{clm-product}]
  For (1), let $f$ be an $n$-ary operation on $A\times B$, i.e.,
  $f\colon(A\times B)^n\to A\times B$. We will write an $n$-tuple of pairs
  from $A\times B$ as an $n\times 2$ matrix $[\bar{a}\ \bar{b}]$
  with columns $\bar{a}\in A^n$ and $\bar{b}\in B^n$. The rows are
  the pairs $(a_i,b_i)$ ($i<n$) where $\bar{a}=[a_0\ \dots\ a_{n-1}]^T$
  and $\bar{b}=[b_0\ \dots\ b_{n-1}]^T$. Thus, when $*$ is applied
  coordinatewise (down columns)
  to two $n$-tuples,
  $[\bar{a}\ \bar{b}]$ and $[\bar{a}'\ \bar{b}']$ in $(A\times B)^n$,
  we get
  \begin{equation}
    \label{eq1-product}
    [\bar{a}\ \bar{b}]*[\bar{a}'\ \bar{b}']=[\bar{a}\ \bar{b}'].
  \end{equation}

  Let $\tilde{f}_A$ denote the function
  $\tilde{f}_A\colon(A\times B)^n\to A$ obtained from $f$ by composing it with
  the function $A\times B\to A$, $(a,b)\mapsto a$, and similarly, let
  $\tilde{f}_B\colon(A\times B)^n\to B$ be obtained from $f$ by composing it
  with the function $A\times B\to B$, $(a,b)\mapsto b$. We have
  \begin{equation}
    \label{eq2-product}
  f([\bar{a}\ \bar{b}])
  =\bigl(\tilde{f}_A([\bar{a}\ \bar{b}]),\,\tilde{f}_B([\bar{a}\ \bar{b}])\bigr)
  \quad\text{for all $[\bar{a}\ \bar{b}]\in(A\times B)^n$}.
  \end{equation}

  Now we are ready to prove the equivalence of the three conditions in (1).
  The last two of these conditions are different ways of stating the same
  relationship between $f$ and $*$,
  and they are easily seen to be implied
  by the first condition. Therefore it remains to prove that the first
  condition follows from the second.
  The second condition is the
  statement that
  \begin{equation}
    \label{eq3-product}
  f([\bar{a}\ \bar{b}]*[\bar{a}'\ \bar{b}'])=
  f([\bar{a}\ \bar{b}])*f([\bar{a}'\ \bar{b}'])
  \quad\text{for all
    $[\bar{a}\ \bar{b}],[\bar{a}'\ \bar{b}']\in(A\times B)^n$}.
  \end{equation}
  By applying \eqref{eq1-product} and \eqref{eq2-product} we see that the
  left hand side of the equality in \eqref{eq3-product} is
  \[
  f([\bar{a}\ \bar{b}]*[\bar{a}'\ \bar{b}'])=
  f([\bar{a}\ \bar{b}'])=
  \bigl(\tilde{f}_A([\bar{a}\ \bar{b}']),\tilde{f}_B([\bar{a}\ \bar{b}'])\bigr),
  \]
  while the right hand side is
  \[
  f([\bar{a}\ \bar{b}])*f([\bar{a}'\ \bar{b}'])=
  \bigl(\tilde{f}_A([\bar{a}\ \bar{b}]),\tilde{f}_B([\bar{a}'\ \bar{b}'])\bigr).
  \]
  Thus, \eqref{eq3-product} is equivalent to the condition that
  $\tilde{f}_A$ does not depend on the second column of the input matrix
  $[\bar{a}\ \bar{b}]$, and
  $\tilde{f}_B$ does not depend on the first column of the input matrix
  $[\bar{a}'\ \bar{b}']$.
  That is, there exist $f_A\colon A^n\to A$ and $f_B\colon B^n\to B$ such that
  \[
  f([\bar{a}\ \bar{b}])=\bigl( f_A(\bar{a}), f_B(\bar{b})\bigr)
  \quad\text{for all $\bar{a}\in A^n$ and $\bar{b}\in B^n$},
  \]
  or equivalently, there exist $n$-ary operations $f_A$ on $A$ and $f_B$ on $B$
  such that $f=f_A\times f_B$.
  This finishes the proof of (1).

For the forward implication of statement (2), if $\clo{C}$ is a product
  clone on $A\times B$, then (i) holds by part (1) of this claim and
  (ii) holds by the observation made in the paragraph preceding
  Claim~\ref{clm-product} that $*$ is a product operation where each factor
  is a projection.

For the backward implication of statement (2),   
assume that $\clo{C}$ is a clone on $A\times B$ such that
  conditions (i)--(ii) are met. By statement (1) above, (i) implies that
  every operation $f\in\clo{C}$ is a product operation:
  $f=f_A\times f_B$ for some operations $f_A$ on $A$ and $f_B$ on $B$, of the
  same arity as $f$. Let $\clo{P}:=\{f_A:f\in\clo{C}\}$ and
  $\clo{Q}:=\{f_B:f\in\clo{C}\}$.
  It follows that
  $\clo{P}$ is a clone on $A$, $\clo{Q}$ is a clone on $B$, and $\clo{C}$
  is a subclone of $\clo{P}\times\clo{Q}$.
  We claim that  $\clo{C}=\clo{P}\times\clo{Q}$.
  Let $n\ge1$, and consider arbitrary $n$-ary operations
  $g\in\clo{P}$ and $h\in\clo{Q}$. By the definitions of $\clo{P}$ and
  $\clo{Q}$, there exist $n$-ary operations $g',h'\in\clo{C}$ such that
  $g=g'_A$ and $h=h'_B$; that is, $g'=g\times g'_B$ and $h'=h'_A\times h$.
  By condition (ii) we have $*\in\clo{C}$, therefore
  $g\times h=(g\times g'_B)*(h'_A\times h)=g'*h'\in\clo{C}$.
  This shows that $\clo{C}\supseteq\clo{P}\times\clo{Q}$, which
  completes the proof of (2).
\qedsymbdiamond
\end{proof}
  
It follows from Claim~\ref{clm-product}
that if $\clo{C}$ is a product clone on $A\times B$, then
$*\in\clo{C}\subseteq\clo{Pol}(\gamma(*))$.
Therefore, by applying
Lemma~\ref{lm-invrel}(2) with $R=\{\gamma(*)\}$, we obtain that
$*\in\clo{C}\subseteq\ULC_\kappa(\clo{C})\subseteq\LC_\kappa(\clo{C})\subseteq
\clo{Pol}(\gamma(*))$ for $\kappa\ge4$.
Hence, Claim~\ref{clm-product}(2) yields that $\LC_\kappa(\clo{C})$ and
$\ULC_\kappa(\clo{C})$ are both product clones for $\kappa\ge4$.
\end{proof}

\section{Characterizing ultralocal closure}

Our main goal in this section is to characterize the $\kappa$-ultraclosure of
a clone $\clo{C}$ for each 
nonzero cardinal $\kappa\le\omega$.
The main ingredient is the following characterization of the operations
that are $\lambda$-ultrainterpolable by $\clo{C}$ for some
$\lambda<\omega$.

In what follows, a \emph{cover} of a set $X$
is a set $\mathcal{C}\subseteq\PP(X)$
  of subsets of $X$ such that
  $\bigcup\mathcal{C}=X$.
  So, $\mathcal{C}$ is a \emph{finite cover} of $X$ if
$\mathcal{C}$ is a finite set and a cover of $X$.

\begin{thm}
  \label{thm-char-lambda}
  Let $\clo{C}$ be a clone on a set $A$, and let
  $f\colon A^n\to A$ be an $n$-ary operation on $A$ $(0<n<\omega)$.
  The following conditions are equivalent for any
  $\lambda<\omega$.  
  \begin{enumerate}
  \item[$(\dagger)_\lambda$]
    $f$ is $\lambda$-ultrainterpolable by $\clo{C}$.
  \item[$(\ddagger)_\lambda$]
    $A^n\,\bigl(=\dom(f)\bigr)$ has a finite cover
    $\CCf_\lambda\,\bigl(\subseteq \PP(A^n)\bigr)$ such that
    whenever $\mathcal{B}\subseteq\CCf_\lambda$
    satisfies $|\mathcal{B}|\le\lambda$, there exists an $n$-ary
    $t^{[\mathcal{B}]}\in\clo{C}$ 
    such that $f|_{\bigcup{\mathcal{B}}}=t^{[\mathcal{B}]}|_{\bigcup{\mathcal{B}}}$.
  \end{enumerate}    
\end{thm}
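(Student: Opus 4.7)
My plan centers on the \emph{agreement sets} $E(t):=\{a\in A^n:f(a)=t(a)\}$ for each $n$-ary $t\in\clo{C}$. The implication $(\ddagger)_\lambda\Rightarrow(\dagger)_\lambda$ is the easier direction: given an ultrafilter $\mathcal{U}$ on $I$ and a subset $S=\{\wec{s}_1/\mathcal{U},\dots,\wec{s}_\lambda/\mathcal{U}\}\subseteq(A^I/\mathcal{U})^n$ with representatives $\wec{s}_j=(s_j(i))_{i\in I}\in(A^n)^I$, for each $j$ the \emph{finite} cover $\CCf_\lambda$ of $A^n$ pulls back to a finite cover $\bigl\{\{i\in I: s_j(i)\in C\}\bigr\}_{C\in\CCf_\lambda}$ of $I$, so some $C_j\in\CCf_\lambda$ has $\{i : s_j(i)\in C_j\}\in\mathcal{U}$. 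Applying $(\ddagger)_\lambda$ to $\mathcal{B}:=\{C_1,\dots,C_\lambda\}$ produces a single $t=t^{[\mathcal{B}]}\in\clo{C}$ such that $f$ and $t$ agree on $\bigcup\mathcal{B}\supseteq C_j$ for every $j$, giving $\{i : s_j(i)\in C_j\}\subseteq\{i : f(s_j(i))=t(s_j(i))\}\in\mathcal{U}$ and hence $f_\mathcal{U}(\wec{s}_j/\mathcal{U})=t_\mathcal{U}(\wec{s}_j/\mathcal{U})$ for all $j$.

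For the reverse implication $(\dagger)_\lambda\Rightarrow(\ddagger)_\lambda$, I pass through the intermediate condition $(\star)$: \emph{there exist finitely many $t_1,\dots,t_m\in\clo{C}$ such that every subset of $A^n$ of cardinality at most $\lambda$ is contained in some $E(t_k)$}. To prove $(\dagger)_\lambda\Rightarrow(\star)$ I argue contrapositively. Assuming $(\star)$ fails, for each $t\in\clo{C}$ set $X_t:=\{(a_1,\dots,a_\lambda)\in(A^n)^\lambda : \{a_1,\dots,a_\lambda\}\not\subseteq E(t)\}$; I claim $\{X_t\}_{t\in\clo{C}}$ has the finite intersection property on $I:=(A^n)^\lambda$. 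Indeed, given $t_1,\dots,t_m\in\clo{C}$, the failure of $(\star)$ produces a $\le\lambda$-subset $\{a_1,\dots,a_s\}$ of $A^n$ not contained in any $E(t_k)$, and padding to length $\lambda$ by repeating $a_1$ yields a tuple in $X_{t_1}\cap\dots\cap X_{t_m}$. Extend $\{X_t\}$ to an ultrafilter $\mathcal{U}$ on $I$ and let $s_j\colon I\to A^n$ be the $j$-th projection. Then $X_t\in\mathcal{U}$ translates, via the finite additivity of ultrafilters (the crucial place $\lambda<\omega$ is used), into the existence of some $j$ with $\{i : f(s_j(i))\ne t(s_j(i))\}\in\mathcal{U}$; equivalently, no $t\in\clo{C}$ agrees with $f_\mathcal{U}$ on all of $\{s_1/\mathcal{U},\dots,s_\lambda/\mathcal{U}\}$, so $f$ is not $\lambda$-ultrainterpolable.

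To complete the proof I derive $(\ddagger)_\lambda$ from $(\star)$ by partitioning $A^n$ according to the pattern of membership in $E(t_1),\dots,E(t_m)$: for $\sigma\subseteq\{1,\dots,m\}$ set $C_\sigma:=\bigcap_{k\in\sigma}E(t_k)\cap\bigcap_{k\notin\sigma}\bigl(A^n\setminus E(t_k)\bigr)$ and take $\CCf_\lambda:=\{C_\sigma:C_\sigma\ne\emptyset\}$, a finite cover of $A^n$ of size at most $2^m$. For any $\mathcal{B}=\{C_{\sigma_1},\dots,C_{\sigma_s}\}\subseteq\CCf_\lambda$ with $s\le\lambda$, choosing one point from each $C_{\sigma_j}$ produces a $\le\lambda$-subset of $A^n$ which by $(\star)$ lies in some $E(t_k)$; this forces $k\in\bigcap_j\sigma_j$, so $\bigcup\mathcal{B}\subseteq E(t_k)$ and $t^{[\mathcal{B}]}:=t_k$ works. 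The main obstacle is the ultrafilter construction in the middle paragraph, where both the padding step and the ``union-in-$\mathcal{U}$ implies summand-in-$\mathcal{U}$'' step rely essentially on $\lambda$ being finite; the type partition that converts $(\star)$ into the required finite cover is then routine.
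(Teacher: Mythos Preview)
Your proof is correct and follows essentially the same route as the paper's. Your intermediate condition $(\star)$ is precisely condition~(iii) of the paper's Lemma~3.2 (the failure of the finite intersection property for the complements $N_t=(A^n)^\lambda\setminus E(t)^\lambda$), your ultrafilter construction on $I=(A^n)^\lambda$ with the coordinate projections $s_j$ matches the paper's, and your type partition $\{C_\sigma\}$ is exactly the set of atoms of the Boolean algebra generated by $E(t_1),\dots,E(t_m)$ that the paper uses in proving (ii)$\Rightarrow$(i) of that lemma; the only difference is that you inline the intermediate step rather than isolating it as a separate lemma.
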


Note that
condition $(\ddagger)_\lambda$ and the finite cover $\CCf_\lambda$ involved
both depend on the choice of $f$, $\clo{C}$, and $\lambda$. Dependence on
$f$ and $\clo{C}$ is suppressed in the notation, but when we apply
condition $(\ddagger)_\lambda$, the choice of $f$ and $\clo{C}$ will be
clear from the context.

Condition $(\ddagger)_\lambda$ is illustrated by Figure~\ref{fig-k-uint}.
The figure indicates that $A^n$ has a finite cover
$\CCf_\lambda$
such that for any set
\[
\mathcal{B}=\{B_0,B_1,\dots,B_{\lambda-1}\}\subseteq \CCf_\lambda
\]
consisting of at most $\lambda$ members of $\CCf_\lambda$
there is
a $t^{[\mathcal{B}]}\in\clo{C}$ such that
$f$ and $t^{[\mathcal{B}]}$ agree on
$\bigcup\mathcal{B}$.
Figure~\ref{fig-k-uint} resembles Figure~\ref{fig-k-int}, except that
now we are interpolating $f$ over a set $\mathcal{B}$ of
$\lambda$ regions selected from the finite cover $\CCf_\lambda$
of its domain,
where previously we were interpolating $f$ over a set $S$ of $\lambda$
single points of its domain.

\begin{figure}[h]
\setlength{\unitlength}{1truemm}
\begin{picture}(150,40)
\put(53,0){%
\begin{tikzpicture}[scale=1.2]
\draw[fill=gray] (1.5,2) -- (1.5,1.5) -- (.5,1.5) --
(.5,.8) -- (1.3,.8) -- (1.3,.3) -- (2.3,.3) -- (2.3,1) -- (1.8,1) --
(1.8,1.3) -- (2.2,1.3) -- (2.5,2) -- cycle;
\draw[fill=gray] (2.6,.7) -- (3,.7) -- (3,1.1) -- (2.6,1.1) -- cycle;
\node at (1.27,1.8) {{$B_0$}};
\node at (.3,1.6) {{$B_1$}};
\node at (1.09,.57) {{$B_2$}};
\node at (2.95,1.3) {{$B_{\lambda-1}$}};
\node at (2.7,.5) {{$\dots$}};
\draw[line width=1.2pt] (0,0) -- (3,0) -- (3,2) -- (0,2) -- cycle;
\node at (3.3,2.1) {$A^n$};
\draw[line width=.4pt] (0,0) -- (.6,0) -- (.6,.5) -- (0,.5) -- cycle;
\draw[line width=.4pt] (0,.4) -- (.3,.4) -- (.3,1.2) -- (0,1.2) -- cycle;
\draw[line width=.4pt] (0,1) -- (.5,1) -- (.8,2) -- (0,2) -- cycle;
\draw (.5,.7) circle (.4);
\draw[line width=.4pt] (.5,0) -- (1.2,0) -- (1.4,.9) -- (.7,1) -- cycle;
\draw[line width=.4pt] (.5,.8) -- (1.8,.8) -- (1.8,1.5) -- (.5,1.5) -- cycle;
\draw[line width=.4pt] (.6,1.4) -- (1.6,1.4) -- (1.6,2) -- (.6,2) -- cycle;
\draw[line width=.4pt] (1.5,1.3) -- (2.2,1.3) -- (2.5,2) -- (1.5,2) -- cycle;
\draw[line width=.4pt] (2.5,1) -- (3,1) -- (3,2) -- (2.2,2) -- cycle;
\draw (2.1,1.1) circle (.6);
\draw[line width=.4pt] (1.2,0) -- (2.1,0) -- (2.1,.4) -- (1.2,.4) -- cycle;
\draw[line width=.4pt] (1.3,.3) -- (2.3,.3) -- (2.3,1) -- (1.3,1) -- cycle;
\draw[line width=.4pt] (1.9,0) -- (3,0) -- (3,.8) -- (2.2,.8) -- cycle;
\draw[line width=.4pt] (2.6,.7) -- (3,.7) -- (3,1.1) -- (2.6,1.1) -- cycle;
\node at (1.4,-.4) {$f|_{B_i}=t^{[\mathcal{B}]}|_{B_i}$ for all $B_i\in\mathcal{B}$};
\end{tikzpicture}
  }
\end{picture}
\caption{$f$ is $\lambda$-ultrainterpolable ($\lambda<\omega$)}
\label{fig-k-uint}
\end{figure}
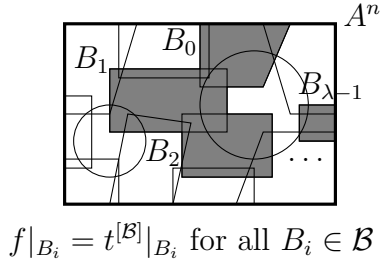

\begin{cor}
  \label{cor-char-kappa}
  Let $\clo{C}$ be a clone on a set $A$, and
  let $\kappa\le\omega$ be a nonzero cardinal.
  The $\kappa$-ultraclosure, $\ULC_\kappa(\clo{C})$, of $\clo{C}$
  consists of all operations $f\colon A^n\to A$ $(0<n<\omega)$
  which satisfy condition $(\ddagger)_\lambda$ from
  Theorem~\ref{thm-char-lambda} for every $\lambda<\kappa$.
\end{cor}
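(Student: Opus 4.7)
The plan is to derive this corollary directly from Theorem~\ref{thm-char-lambda} by unwinding the definition of $\ULC_\kappa(\clo{C})$. By definition, an $n$-ary operation $f$ on $A$ belongs to $\ULC_\kappa(\clo{C})$ if and only if $f$ is $\lambda$-ultrainterpolable by $\clo{C}$ for every cardinal $\lambda<\kappa$, which is precisely condition $(\dagger)_\lambda$ of Theorem~\ref{thm-char-lambda} for each such $\lambda$. Hence the membership condition $f\in\ULC_\kappa(\clo{C})$ is equivalent to the conjunction of $(\dagger)_\lambda$ over all $\lambda<\kappa$.

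The step that matters is the observation that $\kappa\le\omega$ forces every $\lambda<\kappa$ to satisfy $\lambda<\omega$, so Theorem~\ref{thm-char-lambda} applies to each such $\lambda$ and provides the equivalence $(\dagger)_\lambda\Leftrightarrow(\ddagger)_\lambda$. Therefore the conjunction of the $(\dagger)_\lambda$ over $\lambda<\kappa$ is equivalent to the conjunction of the $(\ddagger)_\lambda$ over $\lambda<\kappa$, which is exactly the characterization asserted in the corollary. There is no genuine obstacle here, and in particular no extra work is needed to tie together the finite covers $\CCf_\lambda$ supplied by Theorem~\ref{thm-char-lambda} for different values of $\lambda$: the statement only claims, separately for each $\lambda<\kappa$, that \emph{some} finite cover $\CCf_\lambda$ of $A^n$ witnesses $(\ddagger)_\lambda$, and this is precisely what Theorem~\ref{thm-char-lambda} delivers.
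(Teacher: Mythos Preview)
Your argument is correct and is exactly how the paper intends the corollary to be read: it is an immediate consequence of the definition of $\ULC_\kappa(\clo{C})$ together with Theorem~\ref{thm-char-lambda}, using only that $\kappa\le\omega$ ensures every $\lambda<\kappa$ is finite so the theorem applies. The paper does not supply a separate proof for this corollary, so there is nothing further to compare.
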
  

We will focus primarily on the case $\kappa=\omega$, that is, on
the $\omega$-ultraclosure of clones $\clo{C}$ on infinite sets,
which we also call the \emph{ultralocal closure} of $\clo{C}$. 
In Section~\ref{Section6} we will give examples to show that
there exist clones on infinite sets
that are \emph{not} ultralocally closed (see Theorems~\ref{thm-lots} and
\ref{thm-alt}).

The rest of this section is devoted to the proof of
Theorem~\ref{thm-char-lambda}. 
We start by introducing some terminology and notation, that will allow us
to restate condition $(\ddagger)_\lambda$
of Theorem~\ref{thm-char-lambda}
in a form that is more convenient
for our proof.

Let $\clo{C}$, $f$ with $\dom(f)=A^n$, and $\lambda$ be as
in Theorem~\ref{thm-char-lambda}.
It will be convenient to think of the elements of $A^n$ as columns of length
$n$, and the elements of the set $(A^n)^\lambda$ as
$\lambda$-sequences of column vectors
in
$A^n$, or equivalently, 
as $n\times \lambda$ matrices where each one of the
$\lambda$ columns is an element of $A^n$.
Now, for each $n$-ary operation $t\in\clo{C}$ define
\begin{equation*}
  E_t :=\{[a_i]_{i<\lambda}\in (A^n)^\lambda:
  f(a_i)=t(a_i)\ \text{for all $i<\lambda$}\}
\end{equation*}
to be the set of all $n\times\lambda$ matrices 
where $f$ is equal to $t$.
Hence,
\begin{equation*}
N_t :=(A^n)^\lambda\setminus E_t
\end{equation*}
is the set of all $n\times\lambda$ matrices 
where $f$ is not equal to $t$.

Let $\mathcal{F}_\lambda$
denote the collection of all subsets
of $(A^n)^\lambda$ of the form $N_t$ ($t\in\clo{C}$) defined above.
We will say that $\mathcal{F}_\lambda$  
has the \emph{finite intersection property} if the intersection of any
finite subfamily of $\mathcal{F}_\lambda$ is nonempty.

\begin{lm}
  \label{lm-fip}
  Let $\clo{C}$ be a clone on a set $A$, and let
  $f\colon A^n\to A$ be an $n$-ary operation on $A$ $(0<n<\omega)$.
  The following conditions are equivalent for every
  nonzero $\lambda<\omega$.
  \begin{enumerate}
  \item[{\rm(i)}]
      The condition below from Theorem~\ref{thm-char-lambda}:
      \begin{enumerate}
      \item[$(\ddagger)_\lambda$]
    $A^n\,\bigl(=\dom(f)\bigr)$ has a finite cover
    $\CCf_\lambda\,\bigl(\subseteq \PP(A^n)\bigr)$ such that
    whenever $\mathcal{B}\subseteq\CCf_\lambda$
    satisfies $|\mathcal{B}|\le\lambda$, there exists an $n$-ary
    $t^{[\mathcal{B}]}\in\clo{C}$ 
    such that $f|_{\bigcup{\mathcal{B}}}=t^{[\mathcal{B}]}|_{\bigcup{\mathcal{B}}}$.
      \end{enumerate}  
    \item[{\rm(ii)}]
     $(A^n)^\lambda$ has a finite cover
     $\mathcal{D}_\lambda\,\bigl(\subseteq\PP((A^n)^\lambda)\bigr)$
      such that for every $D\in\mathcal{D}_\lambda$
      there exists $s^{[D]}\in\clo{C}$ such that we have
      \begin{equation}
        \label{eq-ii}
      f(a_i)=s^{[D]}(a_i)
      \text{ for all }
      i<\lambda
      \text{ whenever }
      [a_i]_{i<\lambda}\in D.
      \end{equation}
    \item[{\rm(iii)}]
      $\mathcal{F}_\lambda$ fails to have the finite intersection property.
  \end{enumerate}
\end{lm}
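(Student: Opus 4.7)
The plan is to prove the cycle (i) $\Rightarrow$ (ii) $\Rightarrow$ (iii) $\Rightarrow$ (i). The implication (ii) $\Rightarrow$ (iii) is nearly immediate: condition \eqref{eq-ii} says precisely that $D\subseteq E_{s^{[D]}}$, equivalently $D\cap N_{s^{[D]}}=\emptyset$, so because $\mathcal{D}_\lambda$ covers $(A^n)^\lambda$ the finite intersection $\bigcap_{D\in\mathcal{D}_\lambda}N_{s^{[D]}}$ is empty, witnessing the failure of the finite intersection property for $\mathcal{F}_\lambda$.

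For (i) $\Rightarrow$ (ii), I would pass from a finite cover of $A^n$ to a finite cover of $(A^n)^\lambda$ by $\lambda$-fold boxes. Given $\CCf_\lambda$ as in $(\ddagger)_\lambda$, take as $\mathcal{D}_\lambda$ the collection of all $D_\varphi:=\varphi(0)\times\cdots\times\varphi(\lambda-1)$ indexed by the finitely many functions $\varphi\colon\lambda\to\CCf_\lambda$. Since $\CCf_\lambda$ covers $A^n$, the boxes $D_\varphi$ cover $(A^n)^\lambda$ componentwise, and the set $\mathcal{B}_\varphi:=\{\varphi(0),\ldots,\varphi(\lambda-1)\}\subseteq\CCf_\lambda$ has at most $\lambda$ elements. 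Setting $s^{[D_\varphi]}:=t^{[\mathcal{B}_\varphi]}$ verifies \eqref{eq-ii}, because any $[a_i]_{i<\lambda}\in D_\varphi$ satisfies $a_i\in\varphi(i)\subseteq\bigcup\mathcal{B}_\varphi$.

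The main obstacle is (iii) $\Rightarrow$ (i), which I would handle by a \emph{type} construction. Suppose $\bigcap_{j=1}^m N_{t_j}=\emptyset$ for some $t_1,\ldots,t_m\in\clo{C}$, and write $U_t:=\{a\in A^n:f(a)=t(a)\}$, so that $E_t=(U_t)^\lambda$. The assumption rephrases as $(A^n)^\lambda=\bigcup_{j=1}^m (U_{t_j})^\lambda$. Assign to each $a\in A^n$ its type $\tau(a):=\{j\in\{1,\ldots,m\}:a\in U_{t_j}\}$; evaluating on the constant $\lambda$-tuple $(a,\ldots,a)$ forces $\tau(a)\neq\emptyset$. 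The finitely many nonempty level sets $W_T:=\{a\in A^n:\tau(a)=T\}$ (with $\emptyset\neq T\subseteq\{1,\ldots,m\}$) therefore partition $A^n$, and I take $\CCf_\lambda$ to be the family of all nonempty $W_T$. Now let $\mathcal{B}=\{W_{T_1},\ldots,W_{T_k}\}\subseteq\CCf_\lambda$ with $1\le k\le\lambda$ (the case $k=0$ is trivial). Pick a representative $a_\ell\in W_{T_\ell}$ for each $\ell$, and pad $(a_1,\ldots,a_k)$ to a $\lambda$-tuple by repeating $a_k$. The hypothesis yields an index $j$ with every entry in $U_{t_j}$, so $j\in T_\ell$ for all $\ell$. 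Then $t_j$ agrees with $f$ on every $W_{T_\ell}$, because $a\in W_{T_\ell}$ gives $j\in T_\ell=\tau(a)$, hence $a\in U_{t_j}$; thus $t^{[\mathcal{B}]}:=t_j$ satisfies $(\ddagger)_\lambda$. The only subtlety is to restrict $\CCf_\lambda$ to the nonempty level sets so that representatives are always available for the padding step.
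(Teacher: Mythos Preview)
Your proof is correct. The overall strategy matches the paper's --- the heart of both arguments is a partition of $A^n$ into atoms of a finite Boolean algebra of sets --- but you organize the implications differently and gain a small simplification.

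The paper proves (i)$\Leftrightarrow$(ii) and (ii)$\Leftrightarrow$(iii) separately. Its (ii)$\Rightarrow$(i) must handle an \emph{arbitrary} finite cover $\mathcal{D}_\lambda$ of $(A^n)^\lambda$: it projects each $D\in\mathcal{D}_\lambda$ onto each coordinate, collects all these projections, and takes the atoms of the Boolean algebra they generate as $\mathcal{C}_\lambda$. Your cycle (i)$\Rightarrow$(ii)$\Rightarrow$(iii)$\Rightarrow$(i) lets you return to (i) only from (iii), where the covering sets have the special form $E_{t_j}=(U_{t_j})^\lambda$; all coordinate projections of $E_{t_j}$ coincide with $U_{t_j}$, so the Boolean algebra is simply the one generated by $U_{t_1},\dots,U_{t_m}$, and its atoms are exactly your type classes $W_T$. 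Thus your ``type'' construction is the paper's atom construction specialized to this product situation, which is why your padding-with-representatives argument suffices where the paper needs a somewhat longer verification. Your (i)$\Rightarrow$(ii) via boxes $D_\varphi=\prod_{i<\lambda}\varphi(i)$ is a cosmetic variant of the paper's $(\bigcup\mathcal{B})^\lambda$; both work for the same reason.
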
  

\begin{proof}
  Suppose (i) holds. Since $\CCf_\lambda$ is finite, so is
  \[
  \mathcal{D}_\lambda:=\Bigl\{\Bigl(\bigcup \mathcal{B}\Bigr)^\lambda:
  \mathcal{B}\subseteq\CCf_\lambda,\,
  |\mathcal{B}|\le\lambda\Bigr\}.
  \]
  Since $\CCf_\lambda$ covers $A^n$, it follows that 
  $\mathcal{D}_\lambda$ covers $(A^n)^\lambda$.
  Moreover, our assumption $(\ddagger)_\lambda$ yields that for every
  member $D=(\bigcup \mathcal{B})^\lambda$ of $\mathcal{D}_\lambda$ the operation
  $s^{[D]}:=t^{[\mathcal{B}]}\in\clo{C}$ satisfies the requirement in (ii).
  This finishes the proof of (i) $\Rightarrow$ (ii).

  Conversely, assume (ii), and for each $D\in\mathcal{D}_\lambda$
  and each $j<\lambda$ define $D^{(j)}$ to be the projection of $D$ onto
  its $j$-th coordinate; that is, $D^{(j)}:=\{a_j:[a_i]_{i<\lambda}\in D\}$.
  Furthermore, let $\tilde{D}:=\bigcup\{D^{(j)}:j<\lambda\}$.
  Condition~\eqref{eq-ii} from assumption (ii) implies that for each
  $\tilde{D}$ with $D\in\mathcal{D}_\lambda$,
    \begin{equation}
      \label{eq-tildeD}
    \text{$s^{[D]}\in\clo{C}$\ \ satisfies }  
    f(a)=s^{[D]}(a)\ \text{for all}\  a\in \tilde{D}.
    \end{equation}

  Since $D\subseteq\tilde{D}^\lambda$ for every $D\in\mathcal{D}_\lambda$
  and $\mathcal{D}_\lambda$
  is a finite cover of $(A^n)^\lambda$,
  it follows that
  the set $\mathcal{E}:=\{D^{(j)}:D\in\mathcal{D}_\lambda,\,j<\lambda\}$
  is 
  a finite cover of $A^n$.
  Let $\mathcal{A}$ denote the Boolean algebra of sets
  generated by $\mathcal{E}$. Clearly, $\mathcal{A}$ is finite, and
  the set $\CCf_\lambda$ of all atoms of $\mathcal{A}$ is a finite cover
  of $A^n$ which partitions $A^n$ into nonempty subsets.
  Our goal is to show that $\CCf_\lambda$ satisfies the requirements in
  condition $(\ddagger)_\lambda$.

  Let $\mathcal{B}=\{C_0,\dots,C_{\lambda-1}\}$ be any subset of
  $\CCf_\lambda$ of size $\le\lambda$. Choose $a_i\in C_i$
  for each $i<\lambda$. Since $[a_i]_{i<\lambda}\in (A^n)^\lambda$, there must
  exist $D\in\mathcal{D}_\lambda$ with $[a_i]_{i<\lambda}\in D$.
  Hence, $[a_i]_{i<\lambda}\in\prod_{i<\lambda}(C_i\cap D^{(i)})$, showing that
  each $C_i\cap D^{(i)}$ is a nonempty member of $\mathcal{A}$ contained in
  an atom $C_i$. This forces $C_i\subseteq D^{(i)}$ for all $i<\lambda$.
  Hence,
  \[
  C_0\times\dots\times C_{\lambda-1}\subseteq
  D^{(0)}\times\dots\times D^{(\lambda-1)}\subseteq\tilde{D}^\lambda,
  \]
  which implies that
  $\bigcup\mathcal{B}\subseteq\bigcup\{D^{(i)}:i<\lambda\}=\tilde{D}$.
  Thus, \eqref{eq-tildeD} implies that $f(a)=s^{[D]}(a)$ holds
  for all $a\in\bigcup\mathcal{B}$.
  This completes the proof of (ii) $\Rightarrow$ (i).

  It remains to prove that (ii) $\Leftrightarrow$ (iii).
  Condition (iii) holds, i.e.,
  $\mathcal{F}_\lambda$ fails to have the finite intersection property,
  if and only if $\clo{C}$ contains finitely many
  $n$-ary operations $t_1,\dots,t_r$
  such that $N_{t_1}\cap\dots\cap N_{t_r}=\emptyset$,
  or equivalently, $E_{t_1}\cup\dots\cup E_{t_r}=(A^n)^\lambda$.
  Thus, if (iii) holds, then (ii) also holds with the choice
  $\mathcal{D}_\lambda=\{E_{t_j}:j=1,\dots,r\}$.
  Conversely, if (ii) holds, then we have
  $D\subseteq E_{s^{[D]}}$ for every $D\in\mathcal{D}_\lambda$.
  Hence we have finitely many operations $s^{[D]}$ ($D\in\mathcal{D}_\lambda$)
  in $\clo{C}$ such that
  $\bigcup\{E_{s^{[D]}}:D\in\mathcal{D}_\lambda\}=(A^n)^\lambda$. 
  As we explained at the beginning of this paragraph, this proves (iii).
\end{proof}

\begin{proof}[Proof of Theorem~\ref{thm-char-lambda}]
Let $\clo{C}$ be a clone on a set $A$, and let
$f\colon A^n\to A$ be an $n$-ary operation on $A$ $(0<n<\omega)$.
Theorem~\ref{thm-char-lambda} states for every $\lambda<\omega$,
the property that
  \begin{enumerate}
  \item[$(\dagger)_\lambda$]
    $f$ is $\lambda$-ultrainterpolable by $\clo{C}$
  \end{enumerate}
  is characterized by the condition
  \begin{enumerate}
  \item[$(\ddagger)_\lambda$]
    $A^n\,\bigl(=\dom(f)\bigr)$ has a finite cover
    $\CCf_\lambda\,\bigl(\subseteq \PP(A^n)\bigr)$ such that
    whenever $\mathcal{B}\subseteq\CCf_\lambda$
    satisfies $|\mathcal{B}|\le\lambda$, there exists an $n$-ary
    $t^{[\mathcal{B}]}\in\clo{C}$ 
    such that $f|_{\bigcup{\mathcal{B}}}=t^{[\mathcal{B}]}|_{\bigcup{\mathcal{B}}}$.
  \end{enumerate}
  
The statement of the theorem is vacuously true for $\lambda=0$,
because
both conditions $(\dagger)_0$ and $(\ddagger)_0$ hold
for $f$ for the following reason. For $(\dagger)_0$, notice that
$f$ is $0$-ultrainterpolable by any $n$-ary projection in $\clo{C}$,
since any two $n$-ary operations (on any set) agree on $\emptyset$.
For $(\ddagger)_\lambda$, if we choose $\CCf_0:=\{A^n\}$, the same
reasoning yields the required equality for $\mathcal{B}=\emptyset$ and
any $n$-ary projection $t^{[\mathcal{B}]}$.

Therefore, we will assume from now on that $\lambda>0$.
First, we will prove $(\dagger)_\lambda\Rightarrow(\ddagger)_\lambda$.
To obtain a contradiction, assume  
that $f$ is $\lambda$-ultrainterpolable by $\clo{C}$, but
$(\ddagger)_\lambda$ fails.
By Lemma~\ref{lm-fip} the latter assumption means that the family
$\mathcal{F}_\lambda$ of subsets of $I:=(A^n)^{\lambda}$ 
has the finite intersection property.
It follows that there exists an
ultrafilter $\mathcal{U}$ on $I$ such that
$\mathcal{F}_\lambda\subseteq\mathcal{U}$.
Each member $\alpha\in I=(A^n)^\lambda $ is an $n\times \lambda$ matrix
$\alpha=[\alpha_j^{(\ell)}]_{j<n}^{\ell<\lambda}$.
For each $j<n$ and $\ell<\lambda$ define an element $\bar{a}_j^{(\ell)}$
of $A^I$ as follows:
$\bar{a}_j^{(\ell)}:=(\alpha_j^{(\ell)})_{\alpha\in I}$.
This yields a subset
\begin{equation}
  \label{eq-setS}
  S:=\{(\bar{a}_0^{(\ell)}/\mathcal{U},\dots,
  \bar{a}_{n-1}^{(\ell)}/\mathcal{U}):\ell<\lambda\}
\end{equation}
of $(A^I/\mathcal{U})^n$ with
$|S|\le\lambda$.

Our assumption is that $f$ is $\lambda$-ultrainterpolable by $\clo{C}$.
Hence, for the ultrafilter $\mathcal{U}$ and set
$S\subseteq(A^I/\mathcal{U})^n$ of size $\le\lambda$ just constructed,
$f_{\mathcal{U}}$ is interpolated on $S$ by $t_{\mathcal{U}}$ for some $n$-ary
operation $t\in\clo{C}$;
that is, $f_{\mathcal{U}}$ and $t_{\mathcal{U}}$ satisfy
\begin{equation}
  \label{eq-fU-tU}
f_{\mathcal{U}}(\bar{a}_0^{(\ell)}/\mathcal{U},\dots,\bar{a}_{n-1}^{(\ell)}/\mathcal{U})
=t_{\mathcal{U}}(\bar{a}_0^{(\ell)}/\mathcal{U},\dots,\bar{a}_{n-1}^{(\ell)}/\mathcal{U})
\quad\text{for all $\ell<\lambda$}.
\end{equation}
Thus, the set
\[
E:=\{\alpha\in I=(A^n)^\lambda :
f(\alpha_0^{(\ell)},\dots,\alpha_{n-1}^{(\ell)})
=t(\alpha_0^{(\ell)},\dots,\alpha_{n-1}^{(\ell)})
\text{ for all $\ell<\lambda$}\}
\]
is a member of $\mathcal{U}$.
Clearly, $E\subseteq E_t$, so $E_t\in\mathcal{U}$. 
However, by the construction of $\mathcal{U}$ we have that
$N_t=I\setminus E_t\in \mathcal{F}_\lambda \subseteq\mathcal{U}$, so
$E_t\notin\mathcal{U}$. This contradiction
finishes the proof of
$(\dagger)_\lambda\Rightarrow(\ddagger)_\lambda$.

To prove the implication $(\ddagger)_\lambda\Rightarrow(\dagger)_\lambda$,
assume that $(\ddagger)_\lambda$ holds,
let $A^I/\mathcal{U}$ be an arbitrary ultrapower of $A$, and
consider a subset $S$ of $(A^I/\mathcal{U})^n$ of size $\le\lambda$.
Although the set $I$ is now different from the set $I$ in the preceding
paragraphs, we may write $S$ in
the form \eqref{eq-setS}
where $\bar{a}_j^{(\ell)}=(a_{ji}^{(\ell)})_{i\in I}\in A^I$
for all $j<n$ and $\ell<\lambda$.
We have to show that there exists an $n$-ary operation $t\in\clo{C}$
such that $t_{\mathcal{U}}$ interpolates $f_{\mathcal{U}}$ on $S$, i.e., 
such that \eqref{eq-fU-tU} holds.

Let $\CCf_\lambda=\{C_0,\dots, C_{r-1}\}$ be a finite cover of $A^n$
(of size $r$)
provided by our assumption $(\ddagger)_\lambda$; i.e., $\CCf_\lambda$
has the property that whenever $\mathcal{B}\subseteq\CCf_\lambda$
satisfies $|\mathcal{B}| \le \lambda$, there exists an $n$-ary
$t^{[\mathcal{B}]} \in\clo{C}$ such that
$f|_{\bigcup\mathcal{B}} = t^{[\mathcal{B}]}|_{\bigcup\mathcal{B}}$.
For each $\lambda$-tuple 
$\epsilon=(\epsilon_0,\dots,\epsilon_{\lambda-1})\in r^\lambda$
of subscripts of members of $\CCf_\lambda$ define
\[
I_\epsilon:=\{i\in I: (a_{0i}^{(\ell)},\dots,a_{n-1,i}^{(\ell)})\in C_{\epsilon_\ell}
\text{ for all $\ell<\lambda$}\}.
\]
These sets form a finite cover
$\mathcal{I}:=\{I_\epsilon:\epsilon\in r^\lambda\}$
of $I$ (with possibly some of the sets $I_\epsilon$ empty).
Since
$\mathcal{U}$ is an ultrafilter on $I$, there exists $\epsilon\in r^\lambda $
such that $I_\epsilon\in\mathcal{U}$.
Now let $\mathcal{B}:=\{C_{\epsilon_0},\dots,C_{\epsilon_{\lambda-1}}\}$.
We have $\mathcal{B}\subseteq\CCf_\lambda$ and $|\mathcal{B}|\le\lambda$,
therefore there is a corresponding $n$-ary operation
$t^{[\mathcal{B}]}\in\clo{C}$ satisfying
$f|_{\bigcup\mathcal{B}} = t^{[\mathcal{B}]}|_{\bigcup\mathcal{B}}$.
Since
$C_{\epsilon_0}\times\dots\times C_{\epsilon_{\lambda-1}}
\subseteq(\bigcup\mathcal{B})^\lambda$,
it follows that the set
\[
\{i\in I:
f(a_{0i}^{(\ell)},\dots,a_{n-1,i}^{(\ell)})
=t^{[\mathcal{B}]}(a_{0i}^{(\ell)},\dots,a_{n-1,i}^{(\ell)})
\text{ for all $\ell<\lambda$}\}
\]
contains $I_\epsilon$, and hence belongs to $\mathcal{U}$.
This establishes \eqref{eq-fU-tU} for $t:=t^{[\mathcal{B}]}$,
and hence completes the proof of
$(\ddagger)_\lambda\Rightarrow(\dagger)_\lambda$.
\end{proof}

\section{Clones containing near unanimity operations}

Recall that for any integer $d\ge3$,
a $d$-ary operation $h$ on a set $A$ is called a
$d$-ary \emph{near unanimity operation} if it satisfies
\begin{equation}
  \label{eq-nu}
h(a,\dots,a,\overbrace{b}^{\text{$i$-th}},a,\dots,a)=a
\qquad\text{for all $a,b\in A$ and $1\le i\le d$,}
\end{equation}
where the sole occurrence of the letter $b$ is in the $i$-th position.

In \cite{vaggione}, Vaggione proved the following infinitary version of the
Baker--Pixley Theorem: Let $\clo{C}$ be the clone of term operations of
an algebra $\al{A}$ with universe $A$, and assume that
$\clo{C}$ contains a $d$-ary
near unanimity operation. If $f$ is an operation on $A$ such that
for every ultrafilter $\mathcal{U}$ on any set $I$,
\begin{enumerate}
\item[$(\diamond)$]
  the extension
  $f_{\mathcal{U}}$ of $f$ to $A^I/\mathcal{U}$ preserves all subalgebras
  of $(\al{A}^I/\mathcal{U})^{d-1}$,
\end{enumerate}
then $f\in\clo{C}$.

Since the clone of term operations of $\al{A}$ is $\clo{C}$, 
the clone of term operations of the ultrapower
$\al{A}^I/\mathcal{U}$ is $\clo{C}_{\mathcal{U}}$.
Therefore, by Lemma~\ref{lm-invrel}(3), $(\diamond)$
is equivalent to the condition that
$f_{\mathcal{U}}$ is $(d-1)$-interpolable by operations in $\clo{C}_{\mathcal{U}}$.
Since $(\diamond)$ is required to hold for every ultrafilter $\mathcal{U}$,
the assumption on $f$ in Vaggione's result is equivalent to
saying that $f$ is $(d-1)$-ultrainterpolable by $\clo{C}$.
Hence, Vaggione's main result in \cite{vaggione}
states, in our terminology, that every clone that contains a $d$-ary near
unanimity operation is $d$-ultraclosed.
We now derive this result from Corollary~\ref{cor-char-kappa}.

\begin{thm}[\cite{vaggione}]
  \label{thm-nu}
  Every clone that contains a $d$-ary near unanimity operation $(d\ge3)$ is
  $d$-ultraclosed, and hence is also ultralocally closed.
\end{thm}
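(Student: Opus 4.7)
The plan is to establish $\clo{C} = \ULC_d(\clo{C})$; the reverse inclusion $\clo{C} \subseteq \ULC_d(\clo{C})$ is automatic from Lemma~\ref{lm-incl}(1). Let $f \in \ULC_d(\clo{C})$ be $n$-ary. By Corollary~\ref{cor-char-kappa}, $f$ satisfies $(\ddagger)_{d-1}$, so there is a finite cover $\mathcal{C} = \{C_0, \ldots, C_{r-1}\}$ of $A^n$ such that for every subset $B \subseteq \{0, \ldots, r-1\}$ with $|B| \leq d-1$ there exists $t^{[B]} \in \clo{C}$ with $t^{[B]}|_{\bigcup_{i \in B} C_i} = f|_{\bigcup_{i \in B} C_i}$. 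The goal is to manufacture a single $t \in \clo{C}$ that agrees with $f$ everywhere on $A^n$.

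The first step---and the sole use of the near unanimity operation $h$---is an upgrade lemma: if the cover $\mathcal{C}$ admits interpolants for every $(d-1)$-subset, then it admits interpolants for every $d$-subset as well. Given $B = \{i_1, \ldots, i_d\}$, set $B_j := B \setminus \{i_j\}$ (a set of size $d-1$) and define $t^{[B]} := h(t^{[B_1]}, \ldots, t^{[B_d]})$. For any $x \in C_{i_k}$ and any $j \neq k$ one has $i_k \in B_j$, hence $x \in \bigcup_{i \in B_j} C_i$, hence $t^{[B_j]}(x) = f(x)$; so $d-1$ of the $d$ inputs to $h$ equal $f(x)$, and the near unanimity identity \eqref{eq-nu} forces $t^{[B]}(x) = f(x)$.

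Next, I induct on $r$. For $r \leq d-1$ the subset $B = \{0, \ldots, r-1\}$ already supplies a global interpolant. For $r \geq d$, merge two pieces by replacing $C_0$ and $C_1$ with $C_0' := C_0 \cup C_1$, obtaining a cover of size $r-1$. A $(d-1)$-subset of this smaller cover either avoids $C_0'$, in which case the original hypothesis supplies an interpolant, or contains $C_0'$, in which case its union corresponds to a $d$-subset of the original indices and the upgrade lemma supplies the interpolant. Thus the reduced cover inherits the $(d-1)$-interpolation property, the induction closes, and we obtain $t \in \clo{C}$ with $t|_{A^n} = f|_{A^n}$, so $f \in \clo{C}$.

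Finally, $\clo{C} = \ULC_d(\clo{C})$ combined with Lemma~\ref{lm-incl}(2) gives $\ULC_\omega(\clo{C}) \subseteq \ULC_d(\clo{C}) = \clo{C}$, so $\clo{C}$ is ultralocally closed as well. The main delicate point is the upgrade lemma together with the verification that merging pieces of the cover preserves $(d-1)$-interpolation; the rest is routine induction.
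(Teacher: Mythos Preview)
Your proof is correct. The paper's argument uses the same Baker--Pixley computation with $h$, but organizes the induction differently: it keeps the cover $\CCf_{d-1}$ fixed and inducts on the size $m$ of the subfamily $\mathcal{B}\subseteq\CCf_{d-1}$ for which an interpolant exists, climbing from $m=d-1$ up to $m=|\CCf_{d-1}|$; at each step $m-1\to m$ it sets $t^{[\mathcal{B}]}:=h(t^{[\mathcal{B}_1]},\dots,t^{[\mathcal{B}_d]})$ for $d$ of the $(m-1)$-element subfamilies $\mathcal{B}_i=\mathcal{B}\setminus\{C_i\}$ and checks, exactly as in your upgrade lemma, that the near unanimity identity forces agreement with $f$ on $\bigcup\mathcal{B}$. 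You instead induct downward on the size $r$ of the cover, merging two blocks at each step, and invoke only the single upgrade $d-1\to d$. Both schemes are routine once the near-unanimity computation is in hand; the paper's version is marginally more direct (no need to re-verify the $(d-1)$-interpolation hypothesis on a modified cover), while yours has the small structural advantage that $h$ is applied only in the fixed $(d-1)\to d$ case rather than in a parametric $(m-1)\to m$ step. One minor wording point: when a $(\le d-1)$-subset of your merged cover contains $C_0'$, the expanded family in the original cover has size at most $d$, not necessarily exactly $d$; the case of size $\le d-1$ is handled by the original hypothesis rather than the upgrade lemma, but this is harmless.
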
  

\begin{proof}
Let $\clo{C}$ be a clone on a set $A$ such that $\clo{C}$ contains a $d$-ary
near unanimity operation $h$ ($d\ge3$). Our goal is to show that
$\clo{C}=\ULC_d(\clo{C})$. By Lemma~\ref{lm-incl}(2), this will also imply
that $\clo{C}=\ULC_\omega(\clo{C})$.
By Corollary~\ref{cor-char-kappa}, to establish $\clo{C}=\ULC_d(\clo{C})$,
it suffices to prove that every operation $f\colon A^n\to A$ 
$(0<n<\omega)$ which satisfies
condition $(\ddagger)_{d-1}$ from
Theorem~\ref{thm-char-lambda} is a member of $\clo{C}$.
So, assume that condition $(\ddagger)_{d-1}$ holds for $f$.
Thus, there is a finite cover
$\CCf_{d-1}$ of $A^n$ and
there exist $n$-ary operations
$t^{[\mathcal{B}]}\in\clo{C}$ for every set $\mathcal{B}\subseteq\CCf_{d-1}$
with $|\mathcal{B}|\le d-1$ such that
$f|_{\bigcup\mathcal{B}}=t^{[\mathcal{B}]}|_{\bigcup\mathcal{B}}$.
If $|\CCf_{d-1}|\le d-1$, the last equality holds for
$\bigcup{\CCf_{d-1}}=A^n$, so
$f=t^{[\CCf_{d-1}]}\in\clo{C}$.

Assume from now on that $|\CCf_{d-1}|\ge d$.
We will apply the usual Baker--Pixley argument to the regions in
$\CCf_{d-1}$ to show, by induction on $m$, that for every $m\ge d-1$,
\begin{enumerate}
\item[$(*)_m$]
  $f|_{\bigcup\mathcal{B}}=t^{[\mathcal{B}]}|_{\bigcup\mathcal{B}}$ for some $n$-ary
  operation $t^{[\mathcal{B}]}\in\clo{C}$,
  whenever $\mathcal{B}\subseteq\CCf_{d-1}$ with $|\mathcal{B}|\le m$.
\end{enumerate}
This will complete the proof, because then by choosing $m=|\CCf_{d-1}|$
and $\mathcal{B}=\CCf_{d-1}$, we will have $\bigcup\CCf_{d-1}=A^n$ and
hence $f=t^{[\CCf_{d-1}]}\in\clo{C}$. 

To prove $(*)_m$ for $m\ge d-1$, notice first that $(*)_{d-1}$ is exactly
the condition that is forced by $(\ddagger)_{d-1}$. Assume therefore that
$m\ge d$ and $(*)_{m-1}$ holds. Let $\mathcal{B}=\{C_0,\dots,C_{m-1}\}$
be a subset of $\CCf_{d-1}$ of cardinality $\le m$. 
For each $i<m$, let $\mathcal{B}_i:=\mathcal{B}\setminus\{C_i\}$.
By the induction hypothesis $(*)_{m-1}$,
there exist $n$-ary operations
$t^{[\mathcal{B}_i]}\in\clo{C}$ such that
\begin{equation}
  \label{eq-induc-hyp}
  f|_{\bigcup\mathcal{B}_i}=t^{[\mathcal{B}_i]}|_{\bigcup\mathcal{B}_i}
  \qquad\text{for every $i<m$.}
\end{equation}
We claim that the operation
\begin{equation}
  \label{eq-new-op}
  t^{[\mathcal{B}]}:=h(t^{[\mathcal{B}_1]},\dots,t^{[\mathcal{B}_d]})\in\clo{C}
\end{equation}
satisfies the equality
\begin{equation}
  \label{eq-to-prove}
  f|_{\bigcup\mathcal{B}}=t^{[\mathcal{B}]}|_{\bigcup\mathcal{B}}
\end{equation}  
required by $(*)_m$.
Indeed, if $a\in\bigcup\mathcal{B}$, then $a\in C_j$ for some $j<m$,
so $a\in\bigcup\mathcal{B}_i$ for all
$i<m$ with $i\not=j$.
Thus, by \eqref{eq-induc-hyp}, $t^{[\mathcal{B}_i]}(a)=f(a)$ for
all $i<m$, $i\not=j$. Hence,
when evaluating the operation on the right hand side of
\eqref{eq-new-op} at $a$, all but possibly one of the arguments of
$h$ are equal to $f(a)$, therefore
the near unanimity identities in \eqref{eq-nu}
force $t^{[\mathcal{B}]}(a)=f(a)$. This proves \eqref{eq-to-prove}, and finishes
the proof of the theorem.
\end{proof}  

\section{Simple Modules}\label{Section5}

Our goal in this section is to prove that the clone of any simple module
is ultralocally closed. We do not know whether simplicity is a necessary
hypothesis for this result. 

\begin{thm}
  \label{thm-simple-module}
  The clone of any simple module is $4$-ultraclosed, and hence is
  also ultralocally closed.
\end{thm}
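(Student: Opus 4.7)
Let $A$ be a simple $R$-module with clone $\clo{C}$, and let $D := \End_R(A)$, a division ring by Schur's lemma. My strategy is to combine Corollary~\ref{cor-specprop2} with the finite-cover characterization given by Corollary~\ref{cor-char-kappa} at $\lambda = 3$, and then exploit B.H. Neumann's theorem on finite covers of an abelian group by subgroups together with the simplicity of $A$.

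By Corollary~\ref{cor-specprop2}, $\ULC_4(\clo{C})$ is the clone of an $S$-module structure on $A$ for some subring $S$ with $R \subseteq S \subseteq \End(\hat{A})$, so it suffices to show $S = R$; equivalently, every unary operation $e \in \ULC_4(\clo{C})$ lies in $R$ (viewed as a subring of $\End(\hat{A})$). Since $\ULC_4(\clo{C}) \subseteq \LC_4(\clo{C})$ preserves binary invariants --- in particular the graphs $\Gamma_\phi = \{(a, \phi(a)) : a \in A\}$ of elements $\phi \in D$, each of which is an $R$-submodule of $A^2$ --- the unary $e$ commutes with every $\phi \in D$, and the double centralizer theorem places $e$ in $\End_D(A)$. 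Next, Corollary~\ref{cor-char-kappa} together with Lemma~\ref{lm-fip} at $\lambda = 3$ yields a finite set $T \subseteq R$ with $A^3 = \bigcup_{r \in T} K_r^3$, where $K_r := \ker(e - r)$ is a $D$-subspace of $A$ (because $e - r \in \End_D(A)$).

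Pass to an irredundant subcover; B.H. Neumann's theorem on finite covers by subgroups then guarantees that every remaining $K_r$ has finite index in the abelian group $A$, so $A/K_r$ is a finite $D$-vector space. If $D$ is infinite, any finite $D$-vector space is $\{0\}$, so some $K_r = A$ and hence $e = r \in R$
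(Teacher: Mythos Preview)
Your setup is sound and matches the paper's approach: reduce via Corollary~\ref{cor-specprop2} to the unary part, observe that $e$ is $D$-linear (the paper gets this via $\LC_4(\clo{C})=\LC_\omega(\clo{C})=\textrm{Clone}({}_{\End(A_D)}A)$, you via preservation of the graphs $\Gamma_\phi$; both are fine), and extract from $(\ddagger)_\lambda$ a finite cover of $A$ by the $D$-subspaces $K_r=\ker(e-r)$. Your use of $\lambda=3$ is harmless but unnecessary for what you actually do: since $A^3=\bigcup_{r\in T}K_r^3$ immediately yields $A=\bigcup_{r\in T}K_r$, you are effectively only using $\lambda=1$, which is exactly what the paper uses.

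The genuine gap is that your argument stops after the case where $D$ is infinite. The substantial case is $D$ finite (hence a finite field) with $A$ infinite-dimensional over $D$: here the $K_r$ can very well be proper subspaces of finite index, and Neumann's lemma alone does not finish the job. The paper handles this as follows. After translating so that $r_0=0$, Neumann gives that $I:=\bigcap K_{r_i}$ has finite index, so each $r_iA$ is finite-dimensional. Using Jacobson density, choose $s_i\in R$ acting on $r_iA$ as isomorphisms onto independent finite-dimensional subspaces $V_i$, and set $t=\sum_i s_ir_i\in R$. Independence of the $V_i$ forces $\ker(t)\subseteq\bigcap_i\ker(r_i)\subseteq\ker(e)$, so $e$ factors $D$-linearly through $t$; since $tA$ is finite-dimensional, density lets you realize the factoring map by some $u\in R$, giving $e=ut\in R$. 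Nothing in your proposal supplies this step (and in particular you never invoke density, which is essential here), so as written the proof is incomplete.

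Two minor remarks: your appeal to ``the double centralizer theorem'' to place $e$ in $\End_D(A)$ is overkill---commuting with every $\phi\in D$ is the \emph{definition} of $D$-linearity. And when you pass to an irredundant subcover to apply Neumann, you should be explicit that you are using the cover $A=\bigcup K_r$ of $A$ (not the cover of $A^3$).
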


\begin{proof}
Let ${}_RA$ be a simple $R$-module, and let $\clo{C}$ denote its clone.
It follows from Corollary~\ref{cor-specprop2}  
that for all $\kappa\ge4$,
the $\kappa$-closure $\LC_\kappa(\clo{C})$ as well as 
the $\kappa$-ultraclosure $\ULC_\kappa(\clo{C})$ of $\clo{C}$
are clones of 
modules on the set $A$ 
which share
the underlying abelian group $\hat{A}$ of ${}_RA$.
Therefore, to determine
  these clones it suffices to determine the 
  rings of scalars of the corresponding modules.
  Let $\bar{R}$ and $S$ denote the scalar rings of the modules
  with clones $\ULC_4(\clo{C})$ and $\LC_\omega(\clo{C})$, respectively.
  We may assume without loss of generality that the actions of the rings
  $R$, $\bar{R}$, and $S$ are faithful, and identify each scalar in $R$, 
  $\bar{R}$, or $S$ with its action as an endomorphism of the underlying
  abelian group $\hat{A}$. Upon this identification
  $R$, $\bar{R}$, and $S$ become the set of all unary operations
  in $\clo{C}$, $\ULC_4(\clo{C})$, and $\LC_\omega(\clo{C})$, respectively.
  Hence $R\subseteq \bar{R}$, $R\subseteq S$, and showing that
  $C$ is $4$-ultraclosed amounts to showing that $R=\bar{R}$. 
  
  It follows from Jacobson's Density Theorem that the scalar ring $S$
  of the local closure $\LC_\omega(\clo{C})$
  of $\clo{C}$ 
  is the double centralizer ring of $R$. As a reminder,
  if ${}_{R}A$ is a simple left $R$-module
  and $D=\End({}_RA)$ is the (single) centralizer ring,
  then by Schur's Lemma, $D$ is a division ring. We let $D$ act on $A$
  on the right,
making $A_D$ a right $D$-vector space. The double centralizer
ring is the ring $\End(A_D)$ of $D$-linear
maps, which will act on the left.
It is clear that $R\subseteq\End(A_D)$.
The Density Theorem
asserts that the ring $R$ of $D$-linear maps is dense in
the ring $\End(A_D)$ of all $D$-linear maps
in the sense that every map $f\in\End(A_D)$ can be interpolated on each
finite subset of $A_D$ by a map in $R$.
In our language this asserts
that the local closure $\LC_\omega(\clo{C})$ of the clone
$\clo{C}$ of ${}_R A$ is the clone of the module
${}_{\End(A_D)} A$. Thus, $S=\End(A_D)$.

Next we want to show that $\Lambda_\omega(\clo{C})=\Lambda_4(\clo{C})$.
Let $\mathcal{R}$ be the set consisting of the following relations on $A$:
the graph $\gamma(+)$ of the binary operation $+$
(addition of the module ${}_R A$), and the graphs $\gamma(d)$
of all unary operations $d\in D$ (endomorphisms of the module ${}_S A$).
All relations in $\mathcal{R}$ have arity $\le3$, therefore
the clone $\clo{Pol}(\mathcal{R})$ is $4$-closed by Lemma~\ref{lm-invrel}(1).
Using the fact (see the proof of Corollary~\ref{cor-specprop2})
that $\clo{Pol}\bigl(\gamma(+)\bigr)$ is the clone of the module
${}_{\End(\hat{A})} A$ one can easily check that
$\clo{Pol}(\mathcal{R})$ coincides
with the clone of the module ${}_{\End(A_D)} A$.
Since the clone of ${}_{\End(A_D)} A$ is $\Lambda_\omega(\clo{C})$,
we get that $\Lambda_\omega(\clo{C})$ is $4$-closed.
This implies that $\Lambda_\omega(\clo{C})=\Lambda_4(\clo{C})$,
as claimed.

It follows now from Lemma~\ref{lm-incl}(2) that
\[
\textrm{Clone}({}_RA)=
\clo{C}\;\subseteq\;
\ULC_4(\clo{C})\;\subseteq\; \LC_4(\clo{C})\;=\;
\LC_\omega(\clo{C})=\textrm{Clone}({}_SA)
\]
where the leftmost term is the clone of ${}_RA$ and
the rightmost term is the clone of ${}_SA$,
$S=\End(A_D)$.
Hence, for the unary components of these clones we have that
$R\subseteq\bar{R}\subseteq S$.
Consequently,
to establish that $\clo{C}$ is $4$-ultraclosed, i.e., $R=\bar{R}$,
it remains to show for every $D$-linear map $f\in S=\End(A_D)$ that
if $f$ is in the $4$-ultraclosure of $\clo{C}$, then $f\in R$.
There is nothing to prove if the set
$A$ is finite, because then $\clo{C}=\LC_\omega(\clo{C})$
(see Lemma~\ref{lm-incl}(3)), and hence by the last displayed line
$\clo{C}=\ULC_4(\clo{C})$.

Assume from now on that $A$ is infinite, 
let $f\in S=\End(A_D)$, and suppose $f$ is in the $4$-ultraclosure of
$\clo{C}$. Our goal is to prove that $f\in R$.
We will apply to $f$
the criterion of Corollary~\ref{cor-char-kappa} for $\kappa=4$
in the case $n=\lambda=1$. By condition $(\ddagger)_1$, for $n=1$,
the set $A$ has a finite cover ${\mathcal C}_1=\{B_0,\ldots,B_{m-1}\}$
such that whenever $B_i\in {\mathcal C}_1$ ($i<m$), there is
an element $r_i\in R$ that interpolates $f$
on $B_i$ (that is, $f|_{B_i}=r_i|_{B_i}$).
Since $f$ and $r_i$ are both $D$-linear mappings,
the kernel of $f-r_i$ is a $D$-subspace of $A_D$
containing $B_i$. Hence, we may enlarge
each set $B_i$ to $B_i'=\ker(f-r_i)$ and still have a
finite cover
$\{B_0',\ldots,B_{m-1}'\}$
of $A$ such that
$f|_{B_i}=r_i|_{B_i}$ for each $i<m$, but now we have
that the sets $B_i'$ ($i<m$) are $D$-subspaces of $A_D$.
Replacing each $B_i$ with $B_i'$ and dropping the primes,
we now assume that our original
set ${\mathcal C}_1$ consisted of $D$-subspaces of $A_D$.

We may, in fact, assume more. Recall that our goal is to
prove that the $D$-linear map
$f$ is in $R$.
But the $D$-linear map
$f$ is in $R$
iff the $D$-linear map $f-r_0$ is in $R$.
Therefore, we may replace
each of $f, r_0, r_1, \ldots, r_{m-1}$ with
$f':=f-r_0, r_0':=r_0-r_0, r_1':=r_1-r_0, \ldots, r_{m-1}':=r_{m-1}-r_0$
and prove the desired statement in the setting where
the first scalar $r_0'=r_0-r_0$ is zero.
Dropping the primes we henceforth assume
that $f|_{B_i}=r_i|_{B_i}$
for all $i<m$,
and that the first ring element
on the list, $r_0$, equals $0$.

If $D$ is infinite, then there is nothing more to do. It is known
that a vector space $A_D$ over an infinite division ring $D$
cannot be expressed as a finite union of proper subspaces,
so $A=B_j$ must hold for some $j<m$. In this case,
$f=f|_A=r_j|_A=r_j$, so $f\in R$,
as desired.

Henceforth we assume that $D$ is a finite field.
Since the vector space $A_D$ is infinite, $A_D$ must be infinite
dimensional.
In this situation we use Neumann's Lemma \cite{neumann1, neumann2},
which asserts
that if a group $G$ is expressible as a finite, irredundant
union of cosets of subgroups, $G=\bigcup_{i<n} g_iH_i$,
then the index $\bigl[G: \bigcap_{i<n} H_i\bigr]$ is finite.
Here we take $G=A$ and $g_iH_i=B_i$ to obtain 
(after discarding some of the $B_i$'s, if the
cover ${\mathcal C}_1$ is redundant) that
the intersection $I:=\bigcap B_i$ is a $D$-subspace of $A$ that has
finite group-theoretic index in $A$.
Since $f|_I=r_0|_I=\dots=r_{m-1}|_I$ and $r_0=0$, 
we derive that
each of the $D$-linear maps
$f, r_0,\ldots, r_{m-1}$ contains $I$ in its kernel.
Since $I$ has finite group-theoretic index in $A$,
the images of the maps $f, r_0, \ldots, r_{m-1}$
are all finite. In particular, the $D$-subspaces
$r_0A, \ldots, r_{m-1}A$ are finite subspaces
of the infinite dimensional $D$-space $A_D$.

Choose $m$ independent subspaces of $A$, $V_0, \ldots, V_{m-1}$,
for which there exist
$D$-linear isomorphisms
$\sigma_i\colon r_iA\to V_i$ ($i<m$).
This is possible
since each $r_iA$ is a finite dimensional
subspace of the infinite dimensional space $A_D$.
By the facts that $R$ is dense in $S=\End(A_D)$
and that each $r_iA$ ($i<m$)
is finite dimensional,
there exist $s_i\in R$ such that 
$s_i|_{r_iA}=\sigma_i$
for all $i<m$.
Consider the ring element $t = s_0r_0+\cdots+s_{m-1}r_{m-1}$
in $R$.

\begin{clm}
  \label{clm-kernel}
  The $D$-linear 
  map $t$
  has kernel contained in $\ker(f)$.
\end{clm}

\begin{proof}[Proof of Claim~\ref{clm-kernel}.]
Choose a vector $v\in A$
and assume that $0=tv = \sum_{i<m} s_ir_iv$.
Since the $s_i$'s have independent ranges, it follows that
$s_ir_iv = 0$ for all $i<m$. But since $s_i$ is an isomorphism
defined on the range of $r_i$, we even get that $r_iv=0$
for all $i<m$. This implies that $v\in \bigcap_{i<m} \ker(r_i)$.
Now, since $v\in A = \bigcup_{i<m} B_i$, there is some $i<m$
such that $v\in B_i$, and for this $i$ we have
$f(v)=r_iv = 0$. Hence $v\in\ker(f)$.
\qedsymbdiamond
\end{proof}

At this point we know that $t$ and $f$ are $D$-linear endomorphisms
of the space $A_D$, and that $\ker(t)\subseteq \ker(f)$.
It follows from the First Isomorphism Theorem
of linear algebra that there is a $D$-linear
map $u$ such that $ut=f$. Since the image of $t$,
$tA \subseteq \sum_{i<m} V_i$, is finite dimensional,
the Density Theorem allows us to interpolate $u$ on $tA$
by an element $u'\in R$. In fact, since $u'$ is itself
$D$-linear, there is no harm in assuming that $u=u'$, so that
$u\in R$. With this choice $f=ut\in R$.

To summarize, we argued that if an operation
$f\colon A\to A$ belongs to the unary component
of the $4$-ultraclosure of 
the clone of ${}_RA$, then in fact $f$
equals an operation in the unary component
of the clone of ${}_RA$. This establishes that the clone
of ${}_RA$ is $4$-ultraclosed. By Lemma~\ref{lm-incl}(2) it follows also
that the clone of ${}_R A$ is ultralocally closed.
\end{proof}

\section{$\LC_\omega$ versus $\ULC_\omega$} \label{Section6}

In this final section we discuss some similarities and dissimilarities
between local closure and ultralocal closure.
Since both $\LC_\omega$ and $\ULC_\omega$
equal the identity operator on
the lattice of clones on a finite set,
we will assume throughout that the base set $A$ is infinite.

It is known (see, e.g., \cite{rosenberg}, \cite[p.~367]{goldstern-pinsker}, or
Subsection~6.C below) that there are
$2^{2^\nu}$ clones on an infinite set $A$ of cardinality $\nu$.
Among these, only $2^\nu$ are locally closed
(see, e.g.~\cite[p.~396]{goldstern-pinsker}), which shows that the range
of the closure operator $\LC_\omega$ on the lattice of clones
on $A$ is small.
One of our goals in this section is to prove the theorem below,
which shows that, in contrast to
$\LC_\omega$, the range of the closure operator $\ULC_\omega$ on
the lattice of clones on $A$
is large.

\begin{thm}
  \label{thm-lots}
If $A$ is an infinite set of cardinality $\nu$, then the lattice of clones on
$A$ contains
\begin{enumerate}
\item[{\rm(1)}]
  an interval $[\clo{C}_1,\clo{D}_1]$ of size $2^{2^\nu}$ 
  such that every clone in the interval is ultralocally closed, and
\item[{\rm(2)}]
  an interval $[\clo{C}_2,\clo{D}_2]$ of size $2^{2^\nu}$ 
  such that no clone in the interval is ultralocally closed.
\end{enumerate}
In fact, the interval $[\clo{C}_1,\clo{D}_1]$ can be chosen so that
$\clo{D}_1=\clo{O}_A$, $\clo{C}_1$ is generated by a single operation, and
the interval contains $2^{2^\nu}$ clones that are maximal in
$\clo{D}_1=\clo{O}_A$.
The interval $[\clo{C}_2,\clo{D}_2]$ can be chosen so that it is
isomorphic to the lattice of all clones on $A$, hence it also contains
$2^{2^\nu}$ clones that are maximal in $\clo{D}_2$.
\end{thm}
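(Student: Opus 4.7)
For part~(1), I take $\clo{C}_1 := \langle h\rangle$ for a single ternary near unanimity operation $h$ on $A$, and $\clo{D}_1 := \clo{O}_A$. By Theorem~\ref{thm-nu}, every clone containing $h$ is $3$-ultraclosed, so every member of the interval $[\clo{C}_1, \clo{O}_A]$ is automatically ultralocally closed. The substantive step is to arrange that $2^{2^\nu}$ maximal subclones of $\clo{O}_A$ sit above $\langle h \rangle$. I would pick $h$ to be a conservative ternary majority operation and invoke the standard ultrafilter construction of maximal clones: each of the $2^{2^\nu}$ ultrafilters on $A$ yields a distinct maximal subclone of $\clo{O}_A$ that contains every conservative operation and in particular contains $h$. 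Since there are only $2^{2^\nu}$ clones on $A$ altogether, the interval has size exactly $2^{2^\nu}$.

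For part~(2), I exploit the product-clone machinery. Fix a bijection $A \cong B \times B$ with $|B| = \nu$, and pick a single clone $\clo{Q}$ on $B$ which is not ultralocally closed (such $\clo{Q}$ is built separately, e.g.\ as an essentially unary clone whose monoid of unary operations strictly grows under $\ULC_\omega$, which is controlled by Corollary~\ref{cor-specprop1}). Set $\clo{C}_2 := \clo{J}_B \times \clo{Q}$ and $\clo{D}_2 := \clo{O}_B \times \clo{Q}$, where $\clo{J}_B$ denotes the clone of projections on $B$. For any clone $\clo{R}$ with $\clo{C}_2 \subseteq \clo{R} \subseteq \clo{D}_2$, the diagonal operation $*$ lies in $\clo{C}_2 \subseteq \clo{R}$, while every operation in $\clo{R} \subseteq \clo{D}_2 \subseteq \clo{Pol}(\gamma(*))$ commutes with $*$; hence Claim~\ref{clm-product}(2) shows $\clo{R}$ is a product clone $\clo{P} \times \clo{Q}'$. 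The uniqueness of the product decomposition (Claim~\ref{clm-product}(1)), applied to the sandwich $\clo{J}_B \times \clo{Q} \subseteq \clo{P} \times \clo{Q}' \subseteq \clo{O}_B \times \clo{Q}$, forces $\clo{Q}' = \clo{Q}$ and $\clo{J}_B \subseteq \clo{P} \subseteq \clo{O}_B$. Thus $\clo{R} \mapsto \clo{P}$ is a lattice isomorphism from $[\clo{C}_2, \clo{D}_2]$ onto the full clone lattice $[\clo{J}_B, \clo{O}_B]$, which is isomorphic to the clone lattice on $A$ (since $|A| = |B|$) and has size $2^{2^\nu}$.

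Finally, no clone $\clo{P} \times \clo{Q}$ in this interval is ultralocally closed. Using the canonical identification $A^I/\mathcal{U} \cong (B^I/\mathcal{U})^2$ together with the equality $(f \times g)_\mathcal{U} = f_\mathcal{U} \times g_\mathcal{U}$, a direct coordinatewise interpolation argument shows that for any $n$-ary $g \in \ULC_\omega(\clo{Q})$ and any $n$-ary projection $\pi$ on $B$, the product $\pi \times g$ lies in $\ULC_\omega(\clo{P} \times \clo{Q})$: to interpolate $(\pi \times g)_\mathcal{U}$ on a finite set $S \subseteq (A^I/\mathcal{U})^n$, interpolate $g_\mathcal{U}$ on the projection of $S$ to its second coordinate by some $g' \in \clo{Q}$, and take $\pi \times g' \in \clo{P} \times \clo{Q}$. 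Choosing $g \in \ULC_\omega(\clo{Q}) \setminus \clo{Q}$ (nonempty by assumption on $\clo{Q}$) and again invoking uniqueness of the product decomposition, we get $\pi \times g \notin \clo{P} \times \clo{Q}$. The main obstacles are the standalone seed constructions: producing a ternary majority that lies in $2^{2^\nu}$ maximal clones of $\clo{O}_A$, and producing a single non-ultralocally-closed clone $\clo{Q}$ on $B$; these are likely handled in separate subsections of Section~\ref{Section6}, after which the arguments above lift each seed to a full interval of size $2^{2^\nu}$.
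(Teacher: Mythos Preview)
Your proposal is correct and follows essentially the same approach as the paper: for~(1) the paper also takes a conservative ternary near unanimity generator and invokes the Goldstern--Shelah maximal clones (your ``ultrafilter construction''), and for~(2) it also uses a product-clone interval with one factor fixed to a non-ultralocally-closed clone (specifically $\clo{Alt}(A)$, treated in Subsection~6.A, with the other factor ranging over all clones on $A$). The only cosmetic difference is that the paper packages the non-closure of each $\clo{P}\times\clo{Q}$ into the general identity $\ULC_\kappa(\clo{P}\times\clo{Q})=\ULC_\kappa(\clo{P})\times\ULC_\kappa(\clo{Q})$ (Lemma~\ref{lm-product} and Corollary~\ref{cor-product}) rather than giving your direct coordinatewise interpolation argument, which is essentially one direction of that lemma inlined.
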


Another well-known fact
(noted, e.g., in \cite[p.~395]{goldstern-pinsker})
is that
if $A$ is an
infinite set, then the
lattice of all locally closed clones on $A$
is not algebraic. Equivalently, the
closure operator
\[
\LC_\omega\langle-\rangle\colon
\PP(\clo{O}_A)\to\PP(\clo{O}_A),\ \
F\mapsto\LC_\omega(\langle F\rangle)
\]
on $\clo{O}_A$, which assigns to each set of operations the least
locally closed clone containing it, is not an algebraic closure operator.
Here we say
that a closure operator on a set $S$ is \emph{algebraic} if
for any set $X\subseteq S$, $X$ is closed if and only if $X$
is the set-theoretic union of
the closures of its finite subsets.

Analogously, given an infinite cardinal $\kappa$, we will say that 
a closure operator 
$\bar{\phantom{b}}\colon \PP(S)\to\PP(S)$, $X\mapsto\bar{X}$
on $S$ is \emph{$\kappa$-algebraic} if
for any set $X\subseteq S$,
\begin{equation}
  \label{eq-def-kappa-alg}
X=\bar{X}\ \ \Leftrightarrow\ \
X=\bigcup\{\bar{Y}:Y\subseteq X,\,|Y|<\kappa\}.
\end{equation}
So, a closure operator $\bar{\phantom{b}}$ on $S$
is $\kappa$-algebraic
if for any set $X\subseteq S$, $X$ is closed if and only if $X$
is the union of the closures of its subsets of size less than $\kappa$.
In this terminology `algebraic'
is the same as `$\omega$-algebraic'.

\begin{thm}
  \label{thm-omega1-alg}
For arbitrary infinite set $A$, the closure operator
\[
\ULC_\omega\langle-\rangle\colon
\PP(\clo{O}_A)\to\PP(\clo{O}_A),\ \
F\mapsto\ULC_\omega(\langle F\rangle),
\]  
which assigns to each set of operations on $A$ the least ultralocally closed
clone containing it,
\begin{enumerate}
\item[{\rm(1)}]
  is not algebraic, but
\item[{\rm(2)}]
  it is $\omega_1$-algebraic.
\end{enumerate}
Thus, a clone $\clo{C}$ on $A$ is ultralocally closed if and only if
$\clo{C}$ contains the ultralocal closure of every countably generated
subclone of $\clo{C}$.
\end{thm}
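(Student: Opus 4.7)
\emph{Strategy.} The plan is to use the finite-cover characterization of Corollary~\ref{cor-char-kappa} to bound the witnesses needed for ultralocal interpolation, and then to produce a counterexample to $\omega$-algebraicity from Theorem~\ref{thm-lots}(2). The key observation is that each individual condition $(\ddagger)_\lambda$ (for fixed $\lambda<\omega$) needs only finitely many witnesses from the generating clone, so the countably many values $\lambda<\omega$ together use only countably many witnesses; this yields~(2). Generically, those countably many witnesses cannot be consolidated into finitely many, yielding~(1).

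\emph{Plan for (2).} Fix $X\subseteq\clo{O}_A$ and $f\in\ULC_\omega(\langle X\rangle)$. By Corollary~\ref{cor-char-kappa}, for every $\lambda<\omega$ there exist a finite cover $\CCf_\lambda$ of $A^n$ and, for each of the finitely many subfamilies $\mathcal{B}\subseteq\CCf_\lambda$ with $|\mathcal{B}|\le\lambda$, a witness $t_\lambda^{[\mathcal{B}]}\in\langle X\rangle$ interpolating $f$ on $\bigcup\mathcal{B}$. Each such $t_\lambda^{[\mathcal{B}]}$ is a term over a finite subset of $X$; the union of these finite subsets as $\lambda$ ranges over $\omega$ and $\mathcal{B}$ over the finitely many relevant subfamilies is a countable $X_0\subseteq X$ such that every $t_\lambda^{[\mathcal{B}]}$ lies in $\langle X_0\rangle$. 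Then $(\ddagger)_\lambda$ still holds for $f$ relative to $\langle X_0\rangle$ for every $\lambda<\omega$, so Corollary~\ref{cor-char-kappa} delivers $f\in\ULC_\omega(\langle X_0\rangle)$. Combined with monotonicity this gives
\[
\ULC_\omega(\langle X\rangle)=\bigcup\bigl\{\ULC_\omega(\langle X_0\rangle):X_0\subseteq X,\,|X_0|\le\omega\bigr\},
\]
which is precisely $\omega_1$-algebraicity; the fixed-point reformulation at the end of the theorem is then the standard equivalent description, with the non-trivial direction following by applying this bound to any $f\in\ULC_\omega(\clo{C})\setminus\clo{C}$.

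\emph{Plan for (1) and main obstacle.} To defeat $\omega$-algebraicity, the plan is to invoke Theorem~\ref{thm-lots}(2): take an interval $[\clo{C}_2,\clo{D}_2]$ no member of which is ultralocally closed, set $\clo{C}:=\clo{D}_2$, and pick any $g\in\ULC_\omega(\clo{C})\setminus\clo{C}$. With $X:=\clo{C}$ we have $g\in\ULC_\omega(\langle X\rangle)$, and to defeat algebraicity it suffices to verify that $g\notin\ULC_\omega(\langle F\rangle)$ for every finite $F\subseteq X$. The intended mechanism is that the non-ultralocally-closed behaviour is confined to $[\clo{C}_2,\clo{D}_2]$, that no finite $F\subseteq\clo{D}_2$ generates a subclone containing $\clo{C}_2$, and that subclones of $\clo{D}_2$ not containing $\clo{C}_2$ are ultralocally closed, so that $\ULC_\omega(\langle F\rangle)=\langle F\rangle\subseteq\clo{C}\not\ni g$. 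This is the main obstacle: statement~(2) is essentially a bookkeeping corollary of Corollary~\ref{cor-char-kappa}, whereas statement~(1) depends on structural properties of the specific examples built in Section~\ref{Section6} and is not accessible from the abstract characterization alone.
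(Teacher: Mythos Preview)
Your argument for part~(2) is correct and is essentially identical to the paper's: both extract, for each $\lambda<\omega$, the finitely many witnesses $t^{[\mathcal{B}]}$ guaranteed by $(\ddagger)_\lambda$, observe that each lies in $\langle F_\lambda\rangle$ for some finite $F_\lambda\subseteq X$, and take $X_0=\bigcup_\lambda F_\lambda$.

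Your approach to part~(1), however, has a genuine gap that you already sense but understate. Invoking Theorem~\ref{thm-lots}(2) abstractly gives you an interval $[\clo{C}_2,\clo{D}_2]$ of non-ultralocally-closed clones, but it tells you nothing about subclones of $\clo{D}_2$ that lie \emph{outside} that interval. Your proposed mechanism requires that every finitely generated subclone of $\clo{D}_2$ not containing $\clo{C}_2$ be ultralocally closed, and this is simply false in the paper's construction: there $\clo{D}_2$ is (up to a bijection of base sets) $\clo{Alt}(A)\times\clo{O}_A$, and a finite $F\subseteq\clo{D}_2$ can generate a clone whose second factor is an arbitrary finitely generated subclone of $\clo{O}_A$---in particular one that is not ultralocally closed. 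So $\ULC_\omega(\langle F\rangle)$ need not equal $\langle F\rangle$, and your chain of inclusions collapses.

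The paper's proof of~(1) avoids this by working directly with the first factor. It takes $X=\clo{Alt}(A)$ itself (not the product $\clo{D}_2$) and uses the concrete structural fact, established as Lemma~\ref{lm-altB}, that for every \emph{finite} $B\subseteq A$ the clone $\clo{Alt}_B(A)$ is locally closed, hence ultralocally closed. Since any finite $F\subseteq\clo{Alt}(A)$ has all its permutations supported on some common finite $B$, we get $\ULC_\omega(\langle F\rangle)\subseteq\ULC_\omega(\clo{Alt}_B(A))=\clo{Alt}_B(A)\subseteq\clo{Alt}(A)$; yet $\ULC_\omega(\clo{Alt}(A))=\clo{Sym}_\omega(A)\supsetneq\clo{Alt}(A)$ by Theorem~\ref{thm-alt}. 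The missing idea in your attempt is precisely this: you need a clone all of whose finitely generated subclones sit inside something provably ultralocally closed, and that requires the specific Lemma~\ref{lm-altB} rather than the bare existence statement of Theorem~\ref{thm-lots}(2).
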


Of course, for each set $A$, the
local closure operator $\LC_\omega\langle-\rangle$
on $\clo{O}_A$ is $\kappa$-algebraic for large enough $\kappa$,
say for $\kappa>2^{|A|}$, because every clone on $A$ has size $\le 2^{|A|}$.
But there is no fixed $\kappa$ for which the
local closure operator $\LC_\omega\langle-\rangle$ is $\kappa$-algebraic
for all infinite $A$, as the next theorem asserts.

\begin{thm}
  \label{thm-not-kappa-alg}
If $A$ is an infinite set of cardinality $\nu$, then
the closure operator $\LC_\omega\langle-\rangle$ is not $\kappa$-algebraic
for any infinite regular cardinal $\kappa\le\nu$.
\end{thm}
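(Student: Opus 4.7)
The plan is to exploit that, for any closure operator, the implication $X=\bar X\Rightarrow X=\bigcup\{\bar Y:Y\subseteq X,\,|Y|<\kappa\}$ in \eqref{eq-def-kappa-alg} is automatic: singletons $\{f\}$ yield $X\subseteq\bigcup$, and monotonicity gives $\bigcup\subseteq\bar X$. So for each infinite regular $\kappa\le\nu$ I need only exhibit a single clone $X_\kappa$ on $A$ that is \emph{not} locally closed but nevertheless satisfies $X_\kappa=\bigcup\{\LC_\omega\langle Y\rangle:Y\subseteq X_\kappa,\,|Y|<\kappa\}$.

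I would take $X_\kappa$ to be the clone of all essentially unary operations $f(x_1,\dots,x_n)=\phi_f(x_{i_f})$ whose underlying unary function $\phi_f\colon A\to A$ has $|M(\phi_f)|<\kappa$, where the \emph{moved set} is $M(\phi):=\{a\in A:\phi(a)\ne a\}$. The inclusion $M(\phi\circ\psi)\subseteq M(\phi)\cup M(\psi)$ (together with the trivial closure of $<\kappa$ under finite unions) shows $X_\kappa$ is closed under superposition and hence is a clone. To see that $X_\kappa\subsetneq\LC_\omega(X_\kappa)$, given any essentially unary $h(\bar x)=\phi_h(x_i)$ and any finite $S\subseteq A^n$, set $\psi=\phi_h$ on the finite projection $S_i$ of $S$ and $\psi=\id$ elsewhere; then $g(\bar x):=\psi(x_i)\in X_\kappa$ (its underlying unary moves at most $|S_i|<\omega\le\kappa$ points) and $g|_S=h|_S$, so $\LC_\omega(X_\kappa)$ contains \emph{every} essentially unary operation on $A$. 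Since $\nu\ge\kappa$, the constant operations on $A$ are essentially unary but have $|M(\phi_f)|=\nu\ge\kappa$, so they lie in $\LC_\omega(X_\kappa)\setminus X_\kappa$.

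The main technical step is to prove that $\LC_\omega\langle Y\rangle\subseteq X_\kappa$ for every $Y\subseteq X_\kappa$ with $|Y|<\kappa$. Put $M_Y:=\bigcup_{f\in Y}M(\phi_f)$; the regularity of $\kappa$ guarantees $|M_Y|<\kappa$. A routine induction on term structure shows that every member of $\langle Y\rangle$ is essentially unary, with underlying unary a composition of maps from $\{\phi_f:f\in Y\}$, and therefore fixing $A\setminus M_Y$ pointwise. By Corollary~\ref{cor-specprop1}(1), $\LC_\omega\langle Y\rangle$ is still essentially unary, so any $h\in\LC_\omega\langle Y\rangle$ can be written as $h(\bar x)=\phi_h(x_{i_h})$. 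For each $x\notin M_Y$, finite interpolation on the single diagonal tuple $(x,\dots,x)$ supplies some $t\in\langle Y\rangle$ with $\phi_h(x)=h(x,\dots,x)=t(x,\dots,x)=x$, so $M(\phi_h)\subseteq M_Y$ and $h\in X_\kappa$.

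The conceptually delicate point is the interplay between Corollary~\ref{cor-specprop1}(1) (which forces $h$ into essentially unary form so that a single diagonal tuple already determines $\phi_h$ outside $M_Y$) and the diagonal-tuple interpolation that finishes the argument. The essential place the regularity hypothesis enters is the bound $|M_Y|<\kappa$: for singular $\kappa$ one could take $Y$ of size equal to the cofinality of $\kappa$ with the $|M(\phi_f)|$ unbounded below $\kappa$, forcing $|M_Y|=\kappa$ and breaking the construction—explaining precisely why the theorem restricts to regular cardinals.
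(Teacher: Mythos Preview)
Your proof is correct and follows the same overall architecture as the paper's: both build an essentially unary clone of functions with ``$\kappa$-small support'' (= moved set), use regularity of $\kappa$ to show that any $\kappa$-small subfamily still has $\kappa$-small total support, and then use singleton (diagonal) interpolation to conclude that the local closure of each such subfamily stays inside the clone while the local closure of the whole clone is strictly larger. The one difference is the choice of clone. The paper works with \emph{injective} functions of $\kappa$-small support (the clones $\clo{Inj}_B(A)$), first proves that each $\clo{Inj}_B(A)$ is itself locally closed, and then bounds $\LC_\omega\langle F\rangle$ by one of these; you allow arbitrary unary functions and argue directly that any $h\in\LC_\omega\langle Y\rangle$ must fix $A\setminus M_Y$ pointwise, bypassing the intermediate local-closure lemma. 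Your version is marginally more economical---you avoid Corollary~\ref{cor-specprop1}(2) and the separate local-closure step---while the paper's injective variant makes the witness for proper containment an injection of large support rather than a constant function.
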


Our last theorem answers a question posed by the referee:
``{\it Is it obvious that the full clone on a countable
  set is (or is not)
  the ultralocal closure of a finite (countable) clone?}''
The theorem implies that the full clone on a countable set is not
the ultralocal closure of any set of functions
of cardinality less than $2^\omega$.

\begin{thm}
  \label{thm-referee}
  Let $A$ be an infinite set of cardinality $\nu$, and
  let $F\subseteq\clo{O}_A$.
  If $\ULC_\omega(\langle F\rangle)$ is an uncountable
  clone that contains a near unanimity operation, 
  then $|F|=|\ULC_\omega(\langle F\rangle)|$.
  In particular, if $\ULC_\omega(\langle F\rangle)=\clo{O}_A$, then
  $|F|=2^\nu$.
\end{thm}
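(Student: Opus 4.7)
The plan is to reduce $\ULC_\omega(\langle F \rangle)$ to a one-step generated extension of $\langle F \rangle$ by exploiting the hypothesis that it contains a near unanimity operation, and then to finish with a straightforward cardinality count.

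Set $\clo{C} := \langle F \rangle$ and let $h \in \ULC_\omega(\clo{C})$ be a $d$-ary near unanimity operation ($d \ge 3$). Because $\ULC_\omega(\clo{C})$ is a clone containing both $\clo{C}$ and $h$, the inclusion $\langle \clo{C} \cup \{h\} \rangle \subseteq \ULC_\omega(\clo{C})$ is immediate. For the reverse inclusion I would invoke Theorem~\ref{thm-nu}: the clone $\langle \clo{C} \cup \{h\} \rangle$ contains the $d$-ary near unanimity operation $h$, hence is ultralocally closed. Applying $\ULC_\omega$ to the inclusion $\clo{C} \subseteq \langle \clo{C} \cup \{h\} \rangle$ then yields
\[
\ULC_\omega(\clo{C}) \,\subseteq\, \ULC_\omega(\langle \clo{C} \cup \{h\} \rangle) \,=\, \langle \clo{C} \cup \{h\} \rangle,
\]
so I obtain the key equality $\ULC_\omega(\langle F \rangle) = \langle F \cup \{h\} \rangle$.

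From here the cardinality estimate is routine. Since any clone on $A$ generated by a set $X$ has cardinality at most $\max(|X|, \omega)$, one gets $|\ULC_\omega(\langle F \rangle)| \leq \max(|F|, \omega)$. When $\ULC_\omega(\langle F \rangle)$ is uncountable, this forces $|F|$ itself to be uncountable and $|\ULC_\omega(\langle F \rangle)| \leq |F|$; combined with $|F| \leq |\ULC_\omega(\langle F \rangle)|$ (because $F \subseteq \ULC_\omega(\langle F \rangle)$), this gives the desired equality. For the final assertion I would apply this to $\ULC_\omega(\langle F \rangle) = \clo{O}_A$, noting that $\clo{O}_A$ does contain a ternary majority (hence near unanimity) operation and has cardinality $\nu^\nu = 2^\nu$ by standard cardinal arithmetic, yielding $|F| = 2^\nu$.

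There is no serious obstacle here. The entire argument hinges on the collapse $\ULC_\omega(\clo{C}) = \langle \clo{C} \cup \{h\} \rangle$, which is supplied directly by Theorem~\ref{thm-nu}: adjoining a single near unanimity operation already produces a clone that absorbs its own ultralocal closure. Everything else is bookkeeping with generated-clone sizes.
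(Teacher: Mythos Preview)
Your proof is correct and follows essentially the same approach as the paper: both arguments adjoin the near unanimity operation $h$ to $F$, invoke Theorem~\ref{thm-nu} to conclude that $\langle F\cup\{h\}\rangle$ is already ultralocally closed, deduce $\ULC_\omega(\langle F\rangle)=\langle F\cup\{h\}\rangle$, and finish with the obvious cardinality count. The only differences are cosmetic (the paper names $\ULC_\omega(\langle F\rangle)$ rather than $\langle F\rangle$ as $\clo{C}$, and writes the cardinality estimate as $|F|+\aleph_0$ instead of $\max(|F|,\omega)$).
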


Before proving these results in Subsection~6.D, 
we discuss some examples.

\bigskip

\noindent
6.A. {\bf Alternating groups and their clones. }
For an arbitrary set $A$ and 
for any permutation $\pi$ of $A$ the \emph{support} of $\pi$
is defined to be the set 
$\supp(\pi):=\{a\in A: \pi(a)\not=a\}$.
We will denote 
the group of all permutations of $A$ of finite support by
$\Sym_\omega(A)$. The \emph{alternating group on $A$} is the subgroup
$\Alt(A)$ of $\Sym_\omega(A)$ consisting of all
even permutations.
The essentially unary clones generated by the groups $\Alt(A)$
and $\Sym_\omega(A)$ will be denoted by
$\clo{Alt}(A)$ and $\clo{Sym}_\omega(A)$, respectively.
They are different when $|A|>1$, since $\clo{Sym}_\omega(A)$ will contain odd
permutations of $A$ and 
$\clo{Alt}(A)$ will not.

The next theorem describes an example of a clone that is not
ultralocally closed.
This example will also play role in the proof of
Theorem~\ref{thm-lots}(2).

\begin{thm}
  \label{thm-alt}
  If $A$ is an infinite set, then the clone $\clo{Alt}(A)$
  is not ultralocally closed.
  Its ultralocal closure is the clone $\clo{Sym}_\omega(A)$.
  In fact,
  \begin{enumerate}
  \item[{\rm(i)}]
    $\ULC_d(\clo{Alt}(A))=\clo{Sym}_\omega(A)$\ \
    for all\ \ $4\le d\le\omega$;
    while
  \item[{\rm(ii)}]
    $\LC_d(\clo{Alt}(A))$ is the essentially unary clone generated by
    the monoid of all injective unary operations $A\to A$,
    for all\ \ $4\le d\le\omega$.
  \end{enumerate}
\end{thm}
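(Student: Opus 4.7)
The plan is to reduce both (i) and (ii) to statements about the unary parts of these closure operators. Since $\clo{Alt}(A)$ is essentially unary and its unary operations (the elements of $\Alt(A)$) are injective, Corollary~\ref{cor-specprop1} implies that for every $d\ge 4$ the clones $\LC_d(\clo{Alt}(A))$ and $\ULC_d(\clo{Alt}(A))$ remain essentially unary with injective unary parts. Each clone is therefore determined by its unary part, and (i) and (ii) follow once we show: (ii$'$) every injective $f\colon A\to A$ belongs to $\LC_\omega(\clo{Alt}(A))$; and (i$'$) the unary part of $\ULC_4(\clo{Alt}(A))$ coincides with $\Sym_\omega(A)$. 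Lemma~\ref{lm-incl}(2) then sandwiches $\LC_d$ and $\ULC_d$ between these bounds for every $4\le d\le\omega$.

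The routine portion is (ii$'$) together with the upper bound of (i$'$). For (ii$'$), given injective $f\colon A\to A$ and finite $S\subseteq A$, extend the injective partial map $f|_S$ to a permutation of some finite $T\supseteq S\cup f(S)$ (by any bijection $T\setminus S\to T\setminus f(S)$), and then by the identity outside $T$; if the resulting permutation is odd, compose with a transposition of two elements of $A\setminus T$ to correct parity. This yields an element of $\Alt(A)$ that interpolates $f$ on $S$. For the upper bound of (i$'$), apply Theorem~\ref{thm-char-lambda} with $\lambda=1$ to a unary $g\in\ULC_4(\clo{Alt}(A))$: this furnishes a finite cover $\CCf$ of $A$ and, for each $C\in\CCf$, an element $\sigma_C\in\Alt(A)$ with $g|_C=\sigma_C|_C$. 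Since every $\sigma_C$ has finite support, $\{a\in C:g(a)\neq a\}$ is finite for each $C$, and finiteness of $\CCf$ then forces $g$ to differ from the identity on only finitely many elements. Combined with injectivity, $g$ must be a permutation of $A$ with finite support, so $g\in\Sym_\omega(A)$.

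The main obstacle is the lower bound of (i$'$): for each $\pi\in\Sym_\omega(A)$ and each $\lambda<\omega$, we must exhibit a finite cover of $A$ witnessing $\lambda$-ultrainterpolability of $\pi$ in the sense of Theorem~\ref{thm-char-lambda}. The idea is to exploit the finiteness of $\supp(\pi)$: partition the infinite set $A\setminus\supp(\pi)$ into $\lambda+1$ infinite blocks $D_0,\dots,D_\lambda$ and set $\CCf_\lambda:=\{\supp(\pi)\cup D_i:0\le i\le\lambda\}$. Given $\mathcal{B}\subseteq\CCf_\lambda$ with $|\mathcal{B}|\le\lambda$, some block $D_j$ is entirely disjoint from $\bigcup\mathcal{B}$; choose distinct $a,b\in D_j$ and set $\sigma^{[\mathcal{B}]}:=\pi$ if $\pi$ is even, or $\sigma^{[\mathcal{B}]}:=\pi\cdot(a\,b)$ if $\pi$ is odd. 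Either way $\sigma^{[\mathcal{B}]}\in\Alt(A)$, and it agrees with $\pi$ on $\bigcup\mathcal{B}$ because the transposition $(a\,b)$ moves only elements of $D_j$, which lie outside both $\bigcup\mathcal{B}$ and $\supp(\pi)$. The delicate aspect is engineering the cover so that, no matter which small sub-family is chosen, parity-correcting room remains; partitioning $A\setminus\supp(\pi)$ into $\lambda+1$ infinite blocks achieves exactly this.
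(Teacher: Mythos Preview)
Your proof is correct and follows essentially the same strategy as the paper: reduce to the unary parts via Corollary~\ref{cor-specprop1}, use $(\ddagger)_1$ to bound $\ULC_4$ from above by $\Sym_\omega(A)$, and build finite covers with ``spare room'' to correct parity for the lower bound. The one noteworthy variation is in the lower bound for (i$'$): the paper first observes that $\ULC_\omega(\clo{Alt}(A))$ is closed under composition and that $\Sym_\omega(A)$ is generated by transpositions, so it suffices to place each transposition $(a\,b)$ into $\ULC_\omega(\clo{Alt}(A))$; its cover $\CCf_k$ is then a partition of $A$ into $k{+}1$ blocks of size $\ge 2$ with $a,b$ in one block. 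You instead treat an arbitrary $\pi\in\Sym_\omega(A)$ directly, covering $A$ by the sets $\supp(\pi)\cup D_i$ with the $D_i$ partitioning $A\setminus\supp(\pi)$. Your route avoids the reduction-to-generators step at the cost of carrying $\supp(\pi)$ through the construction; the paper's route is marginally simpler once the reduction is made. Either way the key idea---leave at least one block outside $\bigcup\mathcal{B}$ in which to hide a parity-fixing transposition---is the same.
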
  

\begin{proof}
  The first two sentences of the claim,
  which assert that $\clo{Sym}_\omega(A)$
  is the ultralocal closure of $\clo{Alt}(A)$,
  follow from (i) when $d=\omega$.
  To prove (i)--(ii), 
  fix $d$ such that $4\le d\le\omega$. 
  It follows from Corollary~\ref{cor-specprop1}(1)--(2) that
  both clones $\ULC_d(\clo{Alt}(A))$  and $\LC_d(\clo{Alt}(A))$  
  are essentially unary,
  and every unary operation $f\colon A\to A$ in them is injective.
  Thus, in both statements (i)--(ii), the clone equalities follow if we
  establish that the clones involved contain the same
  injective unary operations $A\to A$.
  
  Now, to finish the proof of (ii), it is enough to observe that
  every injective unary operation $A\to A$ is $k$-interpolable by permutations
  in $\Alt(A)$ for every $k<d$. 

  For the proof of (i) recall that our assumption $d\le\omega$ implies, by
  Lemma~\ref{lm-incl}(2), that
  $\ULC_d(\clo{Alt}(A))\supseteq\ULC_\omega(\clo{Alt}(A))$. 
  Hence, the equality in (i) will follow if we prove that
  for all injective unary operations $f\colon A\to A$,
  \begin{equation}\label{eq-alt}
    f\in\ULC_d(\clo{Alt}(A))
    \ \ \Rightarrow\ \ 
    f\in\Sym_\omega(A)
    \ \ \Rightarrow\ \ 
    f\in\ULC_\omega(\clo{Alt}(A)).
  \end{equation}
  
  To prove the first implication in \eqref{eq-alt} assume that 
  $f\in\ULC_d(\clo{Alt}(A))$ is injective.
  Applying Corollary~\ref{cor-char-kappa} with $\kappa=d$ and $\lambda=1$
  we see that
  $A$ has a finite cover $\CCf_1$ with the property that for each
  $C\in\CCf_1$ there exists $t^{[C]}\in\Alt(A)$ such that
  $f|_C=t^{[C]}|_C$. Since $\CCf_1$ is finite and each $t^{[C]}$ has finite
  support, it follows that $f$ moves at most finitely many elements of $A$.
  Therefore, the injectivity of $f$ implies that $f\in\Sym_\omega(A)$.
  
  For the second implication in \eqref{eq-alt}
  we want to show that $\Sym_\omega(A)$ is contained in the set
  of unary members of $\ULC_\omega(\clo{Alt}(A))$.
  Since $\ULC_\omega(\clo{Alt}(A))$
  is closed under composition, and since $\Sym_\omega(A)$ is generated
  under composition by all transpositions, it suffices to verify that
$\ULC_\omega(\clo{Alt}(A))$ contains every transposition.  
  To conclude that the transposition $f=(a\,b)$ ($a,b\in A$, $a\not=b$)
  belongs to 
$\ULC_\omega(\clo{Alt}(A))$ we need to show,
by Corollary~\ref{cor-char-kappa}, that condition
$(\ddagger)_k$ from Theorem~\ref{thm-char-lambda} holds for all
$k<\omega$. 
  There is nothing to prove for $k=0$, so assume that $k$ is a
  positive integer. Choose $\CCf_k$ to be any partition of $A$ into
  $k+1$ blocks $C_0,C_1,\dots,C_k$ such that $a,b\in C_0$ and every block
  $C_i$ ($i\le k$)
  has size $\ge2$. Clearly, such a partition exists, since $A$ is infinite.
  For every $\mathcal{B}\subseteq\CCf_k$ with $|\mathcal{B}|\le k$
  we have $(a\,b)|_{\bigcup\mathcal{B}}=\id|_{\bigcup\mathcal{B}}$ if
  $C_0\notin\mathcal{B}$, and
  $(a\,b)|_{\bigcup\mathcal{B}}=(a\,b)(c\,d)|_{\bigcup\mathcal{B}}$ if
  $C_0\in\mathcal{B}$ and $c,d$ are distinct elements of some
  $C_i\notin\mathcal{B}$.
  This proves that $(a\,b)\in\ULC_\omega(\clo{Alt}(A))$, as claimed.
\end{proof}  

For every finite subset $B$ of $A$ let $\Alt_B(A)$ denote the subgroup of
$\Alt(A)$ consisting of all permutations $\pi\in\Alt(A)$ with
$\supp(\pi)\subseteq B$.
Let $\clo{Alt}_B(A)$ denote the essentially unary clone generated by
the group $\Alt_B(A)$.

We will now show that these clones $\clo{Alt}_B(A)$, unlike $\clo{Alt}(A)$,
are ultralocally closed.
We will use these clones in the proof of Theorem~\ref{thm-omega1-alg}(1).

\begin{lm}
  \label{lm-altB}
  If $B$ is a finite subset of an infinite set $A$,
  then the clone $\clo{Alt}_B(A)$
  is locally closed, and hence is ultralocally closed; that is,
  \[
  \clo{Alt}_B(A)=\ULC_\omega(\clo{Alt}_B(A))=\LC_\omega(\clo{Alt}_B(A)).
  \]
\end{lm}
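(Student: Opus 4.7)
The plan is to reduce the statement to a claim about unary operations and then exploit the fact that every element of $\Alt_B(A)$ fixes the (nonempty, in fact infinite) complement $A\setminus B$ pointwise. Because $\clo{Alt}_B(A)\subseteq \ULC_\omega(\clo{Alt}_B(A))\subseteq \LC_\omega(\clo{Alt}_B(A))$ by Lemma~\ref{lm-incl}(1), it is enough to prove the reverse inclusion $\LC_\omega(\clo{Alt}_B(A))\subseteq \clo{Alt}_B(A)$; the ultralocal equality then follows for free.

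So I would fix an $n$-ary $f\in\LC_\omega(\clo{Alt}_B(A))$. Since $\clo{Alt}_B(A)$ is essentially unary (being generated by a group of permutations), Corollary~\ref{cor-specprop1}(1) applies (with $\kappa=\omega\ge 4$) and gives that $f$ itself is essentially unary, so $f(x_1,\dots,x_n)=g(x_i)$ for some $i$ and some unary $g\colon A\to A$. The operations in $\clo{Alt}_B(A)$ are exactly the operations of the form $\pi\circ\pi_j^n$ with $\pi\in\Alt_B(A)$, so it suffices to show $g\in \Alt_B(A)$.

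Next I would show that $g$ is locally interpolated by $\Alt_B(A)$: given any finite $T\subseteq A$, applying local interpolability of $f$ to the diagonal set $S=\{(a,\dots,a):a\in T\}\subseteq A^n$ produces some $t\in\clo{Alt}_B(A)$ with $f|_S=t|_S$; writing $t=h\circ\pi_j^n$ with $h\in\Alt_B(A)$ and restricting to $S$ gives $g|_T=h|_T$. Now I would finish with two applications of this. First, because $A$ is infinite and $B$ is finite, $A\setminus B$ is nonempty; for any $a\in A\setminus B$, the interpolating $h$ (obtained from $T=\{a\}$) satisfies $g(a)=h(a)=a$ since $\supp(h)\subseteq B$. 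Hence $g$ agrees with the identity on $A\setminus B$. Second, taking $T=B$ produces $\pi\in\Alt_B(A)$ with $g|_B=\pi|_B$; since $\pi$ also fixes $A\setminus B$ pointwise, the two pieces combine to give $g=\pi\in\Alt_B(A)$, as required.

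There is no real obstacle here: the hard structural content (essential unarity of the local closure, and the inclusion $\ULC_\omega\subseteq\LC_\omega$) is provided by Corollary~\ref{cor-specprop1}(1) and Lemma~\ref{lm-incl}(1), and once we are restricted to a unary $g$, the finiteness of $B$ rigidly forces $g$ to fix $A\setminus B$ and thereby to coincide with a single element of $\Alt_B(A)$. The only small subtlety is making sure that the local interpolation of the $n$-ary $f$ transfers to local interpolation of the unary $g$ by \emph{unary} members of $\clo{Alt}_B(A)$, which is handled by pulling back along the diagonal as above.
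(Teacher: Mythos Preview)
Your proposal is correct and follows essentially the same approach as the paper's proof: reduce to unary operations via Corollary~\ref{cor-specprop1}(1), then use finite interpolation to show that any unary $g$ in the local closure fixes $A\setminus B$ pointwise and agrees on $B$ with some $\pi\in\Alt_B(A)$. The only cosmetic difference is that the paper interpolates once on the single finite set $C=B\cup\{a\}$ (for each $a\in A\setminus B$), which simultaneously yields $g(a)=a$ and $g|_B=\pi_C|_B$, whereas you split this into two separate interpolations (on $\{a\}$ and on $B$); both organizations are equally valid.
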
  

\begin{proof}
  By Corollary~\ref{cor-specprop1}(1), all three clones here are essentially
  unary. Hence, by Lemma~\ref{lm-incl}(1), it suffices to show for every
  unary operation $f\in\LC_\omega(\clo{Alt}_B(A))$ that $f\in\clo{Alt}_B(A)$.
  So, let $f\colon A\to A$ be a unary operation in $\LC_\omega(\clo{Alt}_B(A))$.
  Then $f$ is interpolated by a permutation $\pi_C\in\Alt_B(A)$ for any
  finite set $C=B\cup\{a\}$ where $a\in A\setminus B$.
  Since $a\notin B$, we have $a\notin\supp(\pi_C)$, so $f(a)=\pi_C(a)=a$.
  Letting $a\in A$ vary, we conclude that $f$ is the identity function
  off of $B$, while $f$ agrees with $\pi_C\in\Alt_B(A)$ on $B$.
  Hence, $f\in\Alt_B(A)$.
\end{proof}  

\bigskip

\noindent
6.B. {\bf Product Clones. }
Product clones were defined in Section~\ref{Section2} in the paragraph
preceding Corollary~\ref{cor-specprop3}.
Here we want to show that for large enough $\kappa$, both closure operators
$\ULC_\kappa$ and $\LC_\kappa$ commute with the
formation of product clones.
The special case $\kappa=\omega$ will be applied in the proof of
Theorem~\ref{thm-lots}(2).

\begin{lm}
  \label{lm-product}
  Let $\clo{P}$ be a clone on $A$ and
  $\clo{Q}$ a clone on $B$. 
  \begin{enumerate}
  \item[{\rm(i)}]
    $\ULC_\kappa(\clo{P}\times\clo{Q})=
    \ULC_\kappa(\clo{P})\times\ULC_\kappa(\clo{Q})$ for all $\kappa\ge4$, and
  \item[{\rm(ii)}]
    $\LC_\kappa(\clo{P}\times\clo{Q})=
    \LC_\kappa(\clo{P})\times\LC_\kappa(\clo{Q})$ for all $\kappa\ge4$.
  \end{enumerate}  
\end{lm}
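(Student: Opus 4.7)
The plan is to reduce both equalities to the identification of the two factors of a product clone. By Corollary~\ref{cor-specprop3}, for $\kappa\ge4$ both $\LC_\kappa(\clo{P}\times\clo{Q})$ and $\ULC_\kappa(\clo{P}\times\clo{Q})$ are themselves product clones, and the right-hand sides are product clones by construction. So in each of (i) and (ii) it suffices to show that the $A$-factor of the left-hand side equals $\ULC_\kappa(\clo{P})$ (resp.\ $\LC_\kappa(\clo{P})$) and that the $B$-factor equals $\ULC_\kappa(\clo{Q})$ (resp.\ $\LC_\kappa(\clo{Q})$).

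For (ii), I would prove the two inclusions by elementary interpolation arguments. For $\LC_\kappa(\clo{P})\times\LC_\kappa(\clo{Q})\subseteq\LC_\kappa(\clo{P}\times\clo{Q})$: given $n$-ary $g\in\LC_\kappa(\clo{P})$, $h\in\LC_\kappa(\clo{Q})$, and $S\subseteq(A\times B)^n$ with $|S|<\kappa$, let $S_A,S_B$ be the coordinatewise projections of $S$ into $A^n,B^n$; picking $g'\in\clo{P}$ and $h'\in\clo{Q}$ that interpolate $g,h$ on $S_A,S_B$ respectively yields $g'\times h'\in\clo{P}\times\clo{Q}$ interpolating $g\times h$ on $S$. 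For the reverse inclusion, any $f\in\LC_\kappa(\clo{P}\times\clo{Q})$ has the form $f=g\times h$ by Corollary~\ref{cor-specprop3}; to establish $g\in\LC_\kappa(\clo{P})$, given $S_A\subseteq A^n$ with $|S_A|<\kappa$, I fix a single $b_0\in B$ and form the padded set
\[
S=\bigl\{\bigl((a_1,b_0),\dots,(a_n,b_0)\bigr):(a_1,\dots,a_n)\in S_A\bigr\}\subseteq(A\times B)^n,
\]
which has size $|S_A|<\kappa$; an interpolant $g'\times h'\in\clo{P}\times\clo{Q}$ of $f$ on $S$ then has $\clo{P}$-factor $g'$ agreeing with $g$ on $S_A$. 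The argument for $h$ is symmetric.

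For (i), the same strategy applies once one has the natural identification
\[
(A\times B)^I/\mathcal{U}\;\longleftrightarrow\;(A^I/\mathcal{U})\times(B^I/\mathcal{U}),\qquad \bigl((a_i,b_i)\bigr)_{i\in I}/\mathcal{U}\;\longleftrightarrow\;\bigl((a_i)_{i\in I}/\mathcal{U},\,(b_i)_{i\in I}/\mathcal{U}\bigr).
\]
Under this identification $(g\times h)_{\mathcal{U}}$ corresponds to $g_{\mathcal{U}}\times h_{\mathcal{U}}$, and consequently $(\clo{P}\times\clo{Q})_{\mathcal{U}}$ corresponds to $\clo{P}_{\mathcal{U}}\times\clo{Q}_{\mathcal{U}}$. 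Applying the projection/padding trick of~(ii) inside the ultrapower (with $A,B$ replaced by $A^I/\mathcal{U},B^I/\mathcal{U}$ and $\clo{P},\clo{Q}$ by $\clo{P}_{\mathcal{U}},\clo{Q}_{\mathcal{U}}$) shows that $g_{\mathcal{U}}\times h_{\mathcal{U}}$ is $\lambda$-interpolable by $\clo{P}_{\mathcal{U}}\times\clo{Q}_{\mathcal{U}}$ if and only if $g_{\mathcal{U}}$ is $\lambda$-interpolable by $\clo{P}_{\mathcal{U}}$ and $h_{\mathcal{U}}$ is $\lambda$-interpolable by $\clo{Q}_{\mathcal{U}}$. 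Quantifying over all ultrafilters $\mathcal{U}$ and all $\lambda<\kappa$ yields the desired equality for $\ULC_\kappa$. The step requiring the most care is the ultrapower identification: one needs to check that it carries the $(g\times h)_{\mathcal{U}}$-construction and the ``padding by $b_0$'' across faithfully, so that the one-sided padded set lives in $((A\times B)^I/\mathcal{U})^n$ rather than merely in $(A\times B)^n$. This is routine but must be made explicit; once in place, the rest of~(i) is just a reprise of~(ii) applied to the ultrapowered clones.
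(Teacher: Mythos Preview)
Your proposal is correct and follows essentially the same route as the paper: both arguments invoke Corollary~\ref{cor-specprop3} to reduce to product clones, establish the factorwise interpolability equivalence $(\diamond_{n,\lambda})$ (which you spell out via the projection/padding trick, while the paper simply calls it ``an immediate consequence of the definitions''), and then handle~(i) by transporting~(ii) through the canonical bijection $(A\times B)^I/\mathcal{U}\cong(A^I/\mathcal{U})\times(B^I/\mathcal{U})$. Your version is slightly more explicit about the interpolation details, but the architecture is identical.
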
  

\begin{proof}
  Let $\kappa\ge4$.
  We know from Corollary~\ref{cor-specprop3} that both clones
  $\ULC_\kappa(\clo{P}\times\clo{Q})$ and $\LC_\kappa(\clo{P}\times\clo{Q})$
  are product clones on $A\times B$. Hence the equalities in
  statements (i)--(ii) will follow
  if we prove the following fact for all $0<n<\omega$ and all
  cardinals $\lambda<\kappa$:
  \begin{enumerate}
  \item[($\diamond_{n,\lambda}$)]
    a product operation $f\times g$,
    where $f$ is an $n$-ary operation on $A$ and $g$ is an $n$-ary operation on
    $B$, is $\lambda$-ultrainterpolable [$\lambda$-interpolable] by
    $\clo{P}\times\clo{Q}$ if and only if $f$ is
    $\lambda$-ultrainterpolable [$\lambda$-interpolable] by $\clo{P}$
    and $g$ is
    $\lambda$-ultrainterpolable [$\lambda$-interpolable] by $\clo{Q}$.
  \end{enumerate}

The part of ($\diamond_{n,\lambda}$) that refers to
$\lambda$-interpolability  
is an immediate consequence of the definitions.
Alternatively, one can use
Lemma~\ref{lm-invrel}(3) and the extension of
\cite[Satz~2.3.7(vi)]{pk}
to relations of arbitrary (possibly infinite) arity.
This proves the equality in Lemma~\ref{lm-product}(ii).

Now we prove the part of ($\diamond_{n,\lambda}$) that refers to
$\lambda$-ultrainterpolability. This will prove the
equality in Lemma~\ref{lm-product}(i).
Let $f\times g$ be a product operation as in ($\diamond_{n,\lambda}$). 
Recall from the definition that $f\times g$ is $\lambda$-ultrainterpolable by
$\clo{P}\times\clo{Q}$ if and only if 
\begin{enumerate}
  \item[(1)]
    the operation
    $(f\times g)_{\mathcal{U}}$ is $\lambda$-interpolable by the clone
    $(\clo{P}\times\clo{Q})_{\mathcal{U}}$
    for every ultrafilter $\mathcal{U}$ on any nonempty set $I$.
  \end{enumerate}
  Similarly, $f$ is $\lambda$-ultrainterpolable by $\clo{P}$
  and $g$ is $\lambda$-ultrainterpolable by $\clo{Q}$
  if and only if
  \begin{enumerate}
  \item[(2)]
  $f_{\mathcal{U}}$ is $\lambda$-interpolable by $\clo{P}_{\mathcal{U}}$
  and 
  $g_{\mathcal{U}}$ is $\lambda$-interpolable by $\clo{Q}_{\mathcal{U}}$
  for every ultrafilter $\mathcal{U}$ on any nonempty set $I$.
  \end{enumerate}
  The proof of ($\diamond_{n,\lambda}$) for $\lambda$-ultrainterpolability
  will be complete if we show that conditions (1) and (2) are equivalent.

  Fix $I$ and $\mathcal{U}$.
  The following map is a  bijection between the sets 
  $(A\times B)^I/\mathcal{U}$ and $(A^I/\mathcal{U})\times(B^I/\mathcal{U})$:
  \[
  \epsilon\colon
  (A\times B)^I/\mathcal{U}\to(A^I/\mathcal{U})\times(B^I/\mathcal{U}),
  \quad \bigl((a_i,b_i)\bigr)_{i\in I}/\mathcal{U}\mapsto
  \bigl((a_i)_{i\in I}/\mathcal{U},(b_i)_{i\in I}/\mathcal{U}\bigr).
  \]
  By identifying $(A\times B)^I/\mathcal{U}$ and 
  $(A^I/\mathcal{U})\times(B^I/\mathcal{U})$ via $\epsilon$ we see that
  for any two $n$-ary operations $h_A$ on $A$ and $h_B$ on $B$ we have that
  $(h_A\times h_B)_{\mathcal{U}}$ and $(h_A)_{\mathcal{U}}\times(h_B)_{\mathcal{U}}$
  are the same operation. In particular, this implies that
  $(f\times g)_{\mathcal{U}}$ and $f_{\mathcal{U}}\times g_{\mathcal{U}}$
  are the same operation, and
  $(\clo{P}\times\clo{Q})_{\mathcal{U}}$ and
  $\clo{P}_{\mathcal{U}}\times\clo{Q}_{\mathcal{U}}$ are the same clone.

  Thus, (1) is equivalent to the condition that
  $f_{\mathcal{U}}\times g_{\mathcal{U}}$ is $\lambda$-interpolable by
    $\clo{P}_{\mathcal{U}}\times\clo{Q}_{\mathcal{U}}$
  for every ultrafilter $\mathcal{U}$ on any nonempty set $I$.
  As we observed in the proof (ii), this condition is equivalent to
  condition~(2), which completes the proof.
\end{proof}  

\begin{cor}
  \label{cor-product}
Let $\clo{P}$ be a clone on $A$,
$\clo{Q}$ a clone on $B$, and let $\kappa\ge4$ be a cardinal.
The product clone $\clo{P}\times\clo{Q}$ is $\kappa$-ultraclosed
if and only if both $\clo{P}$ and $\clo{Q}$ are $\kappa$-ultraclosed.
\end{cor}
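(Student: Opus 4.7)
The plan is to deduce Corollary~\ref{cor-product} essentially as a direct consequence of Lemma~\ref{lm-product}(i), together with the observation that a product clone determines its factors uniquely. Recall the definition: $\clo{P}\times\clo{Q}$ is $\kappa$-ultraclosed precisely when $\ULC_\kappa(\clo{P}\times\clo{Q})=\clo{P}\times\clo{Q}$. So my first move is to rewrite the left-hand side using Lemma~\ref{lm-product}(i), obtaining the equivalent equality
\[
\ULC_\kappa(\clo{P})\times\ULC_\kappa(\clo{Q})=\clo{P}\times\clo{Q}.
\]
After that, the statement reduces to showing that this equality of product clones holds if and only if both factor equalities $\ULC_\kappa(\clo{P})=\clo{P}$ and $\ULC_\kappa(\clo{Q})=\clo{Q}$ hold.

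The backward direction is immediate: if both factor equalities hold, then trivially the two product clones are equal, so $\clo{P}\times\clo{Q}$ is $\kappa$-ultraclosed. For the forward direction, I would invoke the uniqueness of the factors of a product clone: from the proof of Claim~\ref{clm-product}(2) it is clear that whenever a clone $\clo{R}$ on $A\times B$ is a product clone, its factors can be recovered as $\{f_A:f\in\clo{R}\}$ and $\{f_B:f\in\clo{R}\}$. Combining this with the inclusions $\clo{P}\subseteq\ULC_\kappa(\clo{P})$ and $\clo{Q}\subseteq\ULC_\kappa(\clo{Q})$ from Lemma~\ref{lm-incl}(1), the equality of product clones above forces $\ULC_\kappa(\clo{P})=\clo{P}$ and $\ULC_\kappa(\clo{Q})=\clo{Q}$.

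There is no real obstacle here; the entire content is already contained in Lemma~\ref{lm-product}(i). The only point worth stating explicitly is the cancellation of factors in a product clone, and this follows from the characterization in Claim~\ref{clm-product}(1) that a product operation $f_A\times f_B$ has uniquely determined components $f_A$ and $f_B$. Thus the proof of Corollary~\ref{cor-product} should consist of at most a few lines: cite Lemma~\ref{lm-product}(i), note factor uniqueness, and split into the two implications.
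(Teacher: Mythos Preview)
Your proposal is correct and matches the paper's approach: the paper states Corollary~\ref{cor-product} without proof, treating it as an immediate consequence of Lemma~\ref{lm-product}(i), which is exactly what you do. Your explicit mention of factor uniqueness (via Claim~\ref{clm-product}) is the only additional detail you supply, and it is the natural justification.
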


\bigskip

\noindent
6.C. {\bf Goldstern--Shelah clones. }
Given an infinite set $A$
and a maximal ideal $\mathcal{I}$ of the Boolean algebra
${\mathcal P}(A)$,
Goldstern and Shelah define in \cite[Definition~2.1]{goldstern-shelah}
a clone $\clo{C}(\mathcal{I})$ by
specifying that $f\in \clo{C}(\mathcal{I})$ iff for each $S\in \mathcal{I}$
we have $f(S,S,\ldots,S)\in \mathcal{I}$.
They prove that $\clo{C}(\mathcal{I})$
is a maximal clone on $A$, and that if $\mathcal{I}$ and $\mathcal{J}$ are distinct
maximal ideals of ${\mathcal P}(A)$,
then $\clo{C}(\mathcal{I})$ and $\clo{C}(\mathcal{J})$ are distinct
maximal clones on $A$.
It is known that there exist $2^{2^{|A|}}$-many
maximal ideals in ${\mathcal P}(A)$, so this construction
produces $2^{2^{|A|}}$-many maximal clones on $A$.
This number is the same as the number of all clones on $A$.

We now derive from Theorem~\ref{thm-nu} that
all the Goldstern--Shelah clones are $3$-ultraclosed, and hence are
ultralocally closed.

\begin{cor}
  \label{cor-G-Sh}
  Every Goldstern--Shelah clone $\clo{C}(\mathcal{I})$
  contains a ternary near unanimity
  operation. Consequently, every such clone
  is $3$-ultraclosed; that is, it satisfies
$\clo{C}(\mathcal{I})=\ULC_3(\clo{C}(\mathcal{I}))$.
\end{cor}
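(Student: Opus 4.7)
The plan is to exhibit a single ternary near unanimity operation $h$ that lies in every Goldstern--Shelah clone $\clo{C}(\mathcal{I})$ simultaneously; Theorem~\ref{thm-nu} with $d=3$ will then immediately deliver $\clo{C}(\mathcal{I})=\ULC_3(\clo{C}(\mathcal{I}))$, which is exactly the desired conclusion.

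First I would fix any linear order $\le$ on $A$ (which exists by well-ordering) and let $h\colon A^3\to A$ be the median with respect to $\le$: for $a,b,c\in A$, let $h(a,b,c)$ be the $\le$-middle element of the multiset $\{a,b,c\}$. The near unanimity identities $h(a,a,b)=h(a,b,a)=h(b,a,a)=a$ then hold automatically, since whenever two of the three inputs coincide, the repeated value is forced to be the $\le$-middle one. The crucial additional feature this particular choice provides is \emph{conservativity}: $h(a,b,c)\in\{a,b,c\}$ for all $a,b,c\in A$, directly from the definition of median.

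Next I would verify that $h\in\clo{C}(\mathcal{I})$ for every maximal ideal $\mathcal{I}$ of $\mathcal{P}(A)$. Conservativity of $h$ yields $h(S,S,S)\subseteq S$ for every subset $S\subseteq A$, and since any ideal of the Boolean algebra $\mathcal{P}(A)$ is downward-closed under inclusion, $S\in\mathcal{I}$ forces $h(S,S,S)\in\mathcal{I}$. By the definition of $\clo{C}(\mathcal{I})$ recalled in the subsection, this precisely says $h\in\clo{C}(\mathcal{I})$. Applying Theorem~\ref{thm-nu} with $d=3$ to the clone $\clo{C}(\mathcal{I})$, which now contains the ternary near unanimity operation $h$, yields $\clo{C}(\mathcal{I})=\ULC_3(\clo{C}(\mathcal{I}))$, and by Lemma~\ref{lm-incl}(2) also ultralocal closedness.

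I do not anticipate any serious obstacle. The only small subtlety is choosing a ternary near unanimity operation whose preservation of an arbitrary $S\in\mathcal{I}$ is transparent; the conservative median accomplishes this uniformly in $\mathcal{I}$, so a single $h$ suffices for all $2^{2^{|A|}}$ Goldstern--Shelah clones at once. A less careful choice---for instance a near unanimity operation derived from some group structure on $A$, whose outputs need not lie in $\{a,b,c\}$---might fail to send some $S\in\mathcal{I}$ into $\mathcal{I}$, and verifying preservation ideal-by-ideal would become awkward. Conservativity is the pivot that renders the whole argument essentially routine.
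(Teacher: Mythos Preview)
Your proposal is correct and follows essentially the same argument as the paper: both exploit that a \emph{conservative} ternary near unanimity operation satisfies $h(S,S,S)\subseteq S$ for every $S$, hence lies in every $\clo{C}(\mathcal{I})$, and then invoke Theorem~\ref{thm-nu}. The only difference is that you explicitly construct such an $h$ as the median of a linear order, whereas the paper simply asserts that any set supports a conservative ternary near unanimity operation.
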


\begin{proof}
An operation $f\colon A^n\to A$ is called \emph{conservative}
if $f(a_1,\ldots,a_n)\in\{a_1,\ldots,a_n\}$ for every
tuple $(a_1,\ldots,a_n)\in A^n$.
If 
$f$ is a conservative operation on $A$,
$\mathcal{I}$ is a maximal ideal of ${\mathcal P}(A)$,
and $S\in \mathcal{I}$, 
then $f(S,\ldots,S)\subseteq S\in \mathcal{I}$,
so $f\in \clo{C}(\mathcal{I})$.
Since any set supports a conservative ternary near unanimity operation,
any Goldstern--Shelah clone $\clo{C}(\mathcal{I})$
contains a ternary near unanimity operation.
By Theorem~\ref{thm-nu} we have
$\clo{C}(\mathcal{I})=\ULC_3(\clo{C}(\mathcal{I}))$.
\end{proof}

\bigskip

\noindent
6.D. {\bf Proofs of Theorems~\ref{thm-lots}--\ref{thm-referee}.}
We start with Theorem~\ref{thm-lots}, which is about the
existence of large intervals in the clone lattice such that either
all clones in the interval are ultralocally closed, or none of them are
ultralocally closed.

\begin{proof}[Proof of Theorem~\ref{thm-lots}]
Let $A$ be an infinite set of cardinality $\nu$.  

To construct a large interval $[\clo{C}_1,\clo{D}_1]$ of ultralocally
closed clones on $A$ let $\clo{D}_1:=\clo{O}_A$.
By the discussion at the beginning of Subsection~6.C 
there are $2^{2^\nu}$
Goldstern--Shelah clones $\clo{C}(\mathcal{I})$ on $A$, each one a maximal 
subclone of $\clo{O}_A$, where
$\mathcal{I}$ runs over all maximal ideals of the Boolean algebra $\PP(A)$.
We also saw in the proof of Corollary~\ref{cor-G-Sh} that
if $f$ is a conservative ternary near unanimity operation on $A$
then $f\in\clo{C}(\mathcal{I})$ for all $\mathcal{I}$.
Therefore, if we let $\clo{C}_1:=\langle f\rangle$ for a fixed such $f$,
then we get that all
Goldstern--Shelah clones are in the interval $[\clo{C}_1,\clo{D}_1]$.
Every clone in this interval contains $f$, and hence is ultralocally closed
by Theorem~\ref{thm-nu}.
The Goldstern--Shelah clones in $[\clo{C}_1,\clo{D}_1]$
witness that the interval $[\clo{C}_1,\clo{D}_1]$ contains $2^{2^\nu}$ clones
that are maximal in $\clo{D}_1$, and hence
$[\clo{C}_1,\clo{D}_1]$ has size $2^{2^{\nu}}$.
This proves statement (1) of Theorem~\ref{thm-lots}
and all properties of the interval
$[\clo{C}_1,\clo{D}_1]$ claimed in the last paragraph of the theorem.

To show that there also exists a large interval of clones on $A$ such that
none of the clones in the interval are ultralocally closed,
we will first work in the lattice of clones on the set $A\times A$.
Consider the product clones 
$\clo{Alt}(A)\times\clo{C}$
where $\clo{Alt}(A)$ is the essentially unary clone generated by the
alternating group on $A$ (see Subsection~6.A), and
$\clo{C}$ is an arbitrary clone on $A$.
It follows from Claim~\ref{clm-product}(2)
that the product clones on $A\times A$ form an interval in the lattice of
clones on $A\times A$. So, the product clones $\clo{Alt}(A)\times\clo{C}$
with fixed first coordinate $\clo{Alt}(A)$ form a subinterval, namely
\begin{equation}
  \label{eq-interval}
  [\clo{Alt}(A)\times\langle\rangle,\clo{Alt}(A)\times\clo{O}_A],
\end{equation}  
where $\langle\rangle$ is the clone of projections on $A$ (i.e.,
the subclone of
$\clo{O}_A$ generated by the empty set of operations).
Clearly, the interval \eqref{eq-interval} is isomorphic to the lattice
of all clones on $A$; in particular, there are $2^{2^\nu}$ clones
in the interval that are maximal in the clone
$\clo{Alt}(A)\times\clo{O}_A$ at the top.
Moreover, Corollary~\ref{cor-product}
implies that none of the clones in the interval \eqref{eq-interval}
are ultralocally closed, because
by Theorem~\ref{thm-alt}, $\clo{Alt}(A)$ is not ultralocally closed.
  
Since $|A\times A|=|A|=\nu$,
the clone of all operations on $A\times A$ is isomorphic to the clone of
all operations on $A$.
Hence the result proved in the preceding
paragraph completes the proof 
of the existence of an interval $[\clo{C}_2,\clo{D}_2]$ with the
properties stated in part (2) and the last paragraph of 
Theorem~\ref{thm-lots}.
\end{proof}

\begin{remrk}
\label{remrk-referee}
  The proof of Theorem~\ref{thm-lots}(1) shows how to 
  find $2^{2^\nu}$ ultralocally closed, near unanimity clones
  on an infinite set $A$ of cardinality $\nu$.
  The referee of this paper suggested the following idea for constructing
  $2^{2^\nu}$ ultralocally closed, essentially unary clones on $A$.
  
  Assume $0,1$ are distinct elements of $A$, and let $\mathcal{A}$ be
  an \emph{independent family} of subsets of $A\setminus\{0,1\}$  
  (i.e., $\mathcal{A}$ is a free generating set of
  the Boolean subalgebra of \mbox{$\mathcal{P}(A\setminus\{0,1\})$}
  generated by $\mathcal{A}$).
  For each set $X\in\mathcal{A}$ let $f_X\colon A\to A$
  be the characteristic
  function of $X$ (i.e., $f_X(a)=1$ if $a\in X$ and $f_X(a)=0$ if
  $a\in A\setminus X$).
  The claim is that for every subset $\mathcal{S}$ of $\mathcal{A}$,
  \[
  f_X\in\ULC_\omega(\langle f_S:S\in\mathcal{S}\rangle)
  \quad\text{if and only if}\quad
  X\in\mathcal{S},
  \]
  hence the ultralocally closed clones
  $\ULC(\langle f_S:S\in\mathcal{S}\rangle)$
  ($\mathcal{S}\subseteq\mathcal{A}$), which are essentially unary by
  Corollary~\ref{cor-specprop1},
  form an ordered subset in the clone
  lattice on $A$ that is order isomorphic to the power set of $\mathcal{A}$.
  Since for every infinite set of size $\nu$ there exists an
  independent family $\mathcal{A}$ of subsets such that
  $|\mathcal{A}|=2^\nu$, this construction yields an ordered set
  of essentially unary, ultralocally closed
  clones on $A$ that is order isomorphic to the power set
  of a $2^\nu$-element set.
\end{remrk}

Our second result to be proved here is Theorem~\ref{thm-omega1-alg},
which is about the algebraicity degree of $\ULC_{\omega}\langle-\rangle$.

\begin{proof}[Proof of Theorem~\ref{thm-omega1-alg}]
Let $A$ be any infinite set.  
For the proof of statement (1),
which asserts that the closure operator $\ULC_\omega\langle-\rangle$
is not algebraic,
we will use the clones $\clo{Alt}(A)$ and
$\clo{Alt}_B(A)$ discussed in Subsection~6.A. 
It is clear from the definition of $\clo{Alt}(A)$ that every
finite subset of $\clo{Alt}(A)$ is contained in $\clo{Alt}_B(A)$ for some
finite $B\subseteq A$. We also know from Lemma~\ref{lm-altB} that
each such clone $\clo{Alt}_B(A)$ is ultralocally closed.
Therefore
\begin{align*}
\bigcup\{\ULC_\omega(\langle F\rangle):F\subseteq\clo{Alt}(A),\,|F|<\omega\}
&{}\subseteq
\bigcup\{\ULC_\omega(\clo{Alt}_B(A)):B\subseteq A,\,|B|<\omega\}\\
&{}=
\bigcup\{\clo{Alt}_B(A):B\subseteq A,\,|B|<\omega\}\\
&{}=\clo{Alt}(A).
\end{align*}
Actually, $=$ holds in place of $\subseteq$ above, because
every term $\ULC_\omega(\clo{Alt}_B(A))$ ($|B|<\omega$)
in the union on the right hand side
can be rewritten as $\ULC_\omega(\langle\Alt_B(A)\rangle)$, where
$\Alt_B(A)$ is a finite set of permutations of $A$. Hence, every term 
in the union on the right hand of side of $\subseteq$ 
appears as a term in the union on the left hand side
as well, proving that $\supseteq$ also holds.
This implies that
\[
\clo{Alt}(A) =
\bigcup\{\ULC_\omega(\langle F\rangle):F\subseteq\clo{Alt}(A),\,|F|<\omega\}.
\]
On the other hand, we have by Theorem~\ref{thm-alt} that
\[
\clo{Alt}(A)\subsetneq\clo{Sym}_\omega(A)=\ULC_\omega(\clo{Alt}(A)).
\]
This proves that the closure operator $\ULC_\omega\langle-\rangle$
is not algebraic.

For claim~(2), 
which states that the closure operator $\ULC_\omega\langle-\rangle$
is $\omega_1$-algebraic,
it suffices to show that the following equality holds for any set
$G$ of operations on $A$:
\begin{equation}
  \label{eq-omega1-alg}
  \ULC_\omega(\langle G\rangle)=
  \bigcup\{\ULC_\omega(\langle F\rangle):F\subseteq G,\,|F|\le\omega\}.
\end{equation}
Indeed, \eqref{eq-omega1-alg} immediately implies that for any
set $G$ of operations on $A$,
\[
  G=\ULC_\omega(\langle G\rangle)\ \ \Leftrightarrow\ \ 
  G=\bigcup\{\ULC_\omega(\langle F\rangle):F\subseteq G,\,|F|\le\omega\},
\]
which is the defining property 
for $\ULC_\omega\langle-\rangle$ to be
$\omega_1$-algebraic. (See \eqref{eq-def-kappa-alg}.)

Now we prove \eqref{eq-omega1-alg}.
The inclusion $\supseteq$ 
holds because $\ULC_{\omega}\langle-\rangle$
is a closure operator.
For the reverse inclusion,
let $f$ be an operation in $\ULC_\omega(\langle G\rangle)$, say $f$ is $n$-ary.
By Corollary~\ref{cor-char-kappa}, this means that
\begin{enumerate}
  \item[$(\ddagger)$]
for every $k<\omega$,
    $A^n\,\bigl(=\dom(f)\bigr)$ has a finite cover
    $\CCf_k\,\bigl(\subseteq \PP(A^n)\bigr)$ such that
    whenever $\mathcal{B}\subseteq\CCf_k$
    satisfies $|\mathcal{B}|\le k$, there exists an $n$-ary
    $t^{[\mathcal{B}]}\in\langle G\rangle$ 
    such that $f|_{\bigcup{\mathcal{B}}}=t^{[\mathcal{B}]}|_{\bigcup{\mathcal{B}}}$.
\end{enumerate}
For each fixed $k<\omega$,
there are finitely many choices for $\mathcal{B}$, and
for each choice of $\mathcal{B}$,
the operation $t^{[\mathcal{B}]}\in\langle G\rangle$
is generated by a finite subset of $G$.
Therefore there exists a finite subset $F_k$ of $G$ such that
condition $(\ddagger)$ holds for that $k$ with $F_k$ in place of $G$.
Hence, by letting $F:=\bigcup\{F_k:k<\omega\}$,
we see that $|F|\le\omega$, and $(\ddagger)$ holds for $F$ in place of $G$.
This shows that $f\in\ULC_\omega(\langle F\rangle)$, and completes the proof
of \eqref{eq-omega1-alg} and statement (2).

The last statement of Theorem~\ref{thm-omega1-alg} is a reformulation of the
statement that the closure operator $\ULC_\omega\langle-\rangle$ is
$\omega_1$-algebraic.
\end{proof}

Next we prove Theorem~\ref{thm-not-kappa-alg} 
about the algebraicity degree of $\LC_{\omega}\langle-\rangle$.

\begin{proof}[Proof of Theorem~\ref{thm-not-kappa-alg}]
Let $A$ be an infinite set of cardinality $\nu$.
Our task is to show that
the closure operator $\LC_\omega\langle-\rangle$ on $\clo{O}_A$
is not $\kappa$-algebraic for any infinite regular cardinal $\kappa\le\nu$.
For each subset $B$ of $A$ let $\Inj_B(A)$ denote the set of all injective
functions $f\colon A\to A$ with `support' in $B$, by which we mean
all injective functions $f\colon A\to A$
satisfying $f(a)=a$ for all $a\in A\setminus B$.
It is clear that, for each subset $B\subseteq A$,
$\Inj_B(A)$ is closed under composition, and hence
it generates an essentially unary clone $\clo{Inj}_B(A)$ with unary part
$\Inj_B(A)$. We claim that the clone $\clo{Inj}_B(A)$ is locally closed.
Indeed, by Corollary~\ref{cor-specprop1},
$\LC_\omega\bigl(\clo{Inj}_B(A)\bigr)$ is an essentially unary clone, and
every unary operation in it is injective. Furthermore, since every injective
function $g\colon A\to A$ in $\LC_\omega\bigl(\clo{Inj}_B(A)\bigr)$ agrees, on 
each singleton set $\{a\}\subseteq A\setminus B$,
with some function in $\clo{Inj}_B(A)$, we get that $g\in\Inj_B(A)$.
This implies that $\clo{Inj}_B(A)$ is a
locally closed clone for every set $B\subseteq A$.

We will use these clones
to show that $\LC_\omega\langle-\rangle$ is not
$\kappa$-algebraic for any infinite regular $\kappa\le\nu$.
Fix such a $\kappa\le\nu=|A|$.
In what follows, a set $X$ is called \emph{$\kappa$-small} if $|X|<\kappa$.
Let 
\[
\clo{G}:=\bigcup\{\clo{Inj}_B(A):B\subseteq A, |B|<\kappa\}
\]
(which is a union of clones), 
and let $G:=\bigcup\{\Inj_B(A):B\subseteq A, |B|<\kappa\}$
(which is a union of sets of unary functions).
$G$ is closed under composition, for if $f_i\in\Inj_{B_i}(A)$ with
$|B_i|<\kappa$ ($i<2$), then $f_1\circ f_0\in\Inj_{B_0\cup B_1}(A)$, and 
$|B_0\cup B_1|<\kappa$.
It follows that $\clo{G}$
is an essentially unary clone with unary part $G$.
Our goal is to show that
\begin{equation}
  \label{eq1-not-kappa-alg}
\clo{G}=\bigcup\{\LC_\omega(\langle F\rangle):F\subseteq\clo{G},\,|F|<\kappa\}
\end{equation}
and
\begin{equation}
  \label{eq2-not-kappa-alg}
  \clo{G}\subsetneq\LC_\omega(\clo{G})=\LC_\omega(\langle\clo{G}\rangle).
\end{equation}
This will prove that the closure operator 
$\LC_\omega\langle-\rangle$ on $\clo{O}_A$ is not $\kappa$-algebraic.
(See \eqref{eq-def-kappa-alg}.)

In \eqref{eq1-not-kappa-alg} the inclusion $\subseteq$ clearly holds,
because
$f\in\clo{G}$ implies that $f\in\LC_\omega(\langle F\rangle)$ for
$F=\{f\}\subseteq\clo{G}$ with $|F|=1<\kappa$.
To prove the reverse inclusion, recall that
each operation $f\in\clo{G}$ is a member of $\clo{Inj}_B(A)$ for
some $\kappa$-small subset $B\subseteq A$. Therefore 
for every
$\kappa$-small
set $F\subseteq\clo{G}$
which appears on the right hand side of
\eqref{eq1-not-kappa-alg}
there exists a `support selecting function'
$f\mapsto B_f$ such that
$f\in\clo{Inj}_{B_f}(A)$ and $|B_f|<\kappa$ for all $f\in F$.
Since $\clo{Inj}_B(A)\subseteq\clo{Inj}_{B'}(A)$ whenever
$B\subseteq B'\,(\subseteq A)$, we see that
$F\subseteq\clo{Inj}_{B_F}(A)$
holds for the set $B_F:=\bigcup\{B_f:f\in F\}$.
Since $F$ is $\kappa$-small, each $B_f$ is $\kappa$-small,
and $\kappa$ is regular, $B_F$ is also $\kappa$-small.
Thus, $\langle F\rangle\subseteq\clo{Inj}_{B_F}(A)$. We proved
earlier that the clone $\clo{Inj}_{B_F}(A)$ is locally closed,
therefore we obtain that
$\LC_\omega(\langle F\rangle)\subseteq\LC_\omega(\clo{Inj}_{B_F}(A))
=\clo{Inj}_{B_F}(A)$. 
This inclusion holds for every
$\kappa$-small set $F\subseteq\clo{G}$ 
on the right hand side of \eqref{eq1-not-kappa-alg}, and so does the 
inequality $|B_F|<\kappa$. Hence,
the right hand side of \eqref{eq1-not-kappa-alg} is contained in $\clo{G}$.

In \eqref{eq2-not-kappa-alg} the equality $=$ holds,
because $\clo{G}$ is a clone,
and hence $\clo{G}=\langle\clo{G}\rangle$.
For the inclusion $\subsetneq$ 
recall that $\kappa$ is an infinite cardinal such that
$\kappa\le\nu=|A|$. Furthermore,
by its definition, $\clo{G}$ is an essentially
unary clone whose unary part $G$ 
consists of all injective functions $A\to A$
of $\kappa$-small support.
Therefore, $G$ does not contain all injections $A\to A$.
By Corollary~\ref{cor-specprop1}, the clone
$\LC_\omega(\clo{G})$ is also essentially unary, and its unary part
consists of injections $A\to A$. However, the unary part of
$\LC_\omega(\clo{G})$ does contain all injections $A\to A$, because
every injective function $A\to A$ is interpolable, on each finite set
$S\subseteq A$, by injections of $\kappa$-small support.
\end{proof}  

Finally, we prove Theorem~\ref{thm-referee}.

\begin{proof}[Proof of Theorem~\ref{thm-referee}]
Let $A$ be an infinite set of cardinality $\nu$, and let $F\subseteq\clo{O}_A$.
The main statement of the theorem is that if the clone
$\ULC_\omega(\langle F\rangle)$
is uncountable and contains a near unanimity operation, then
$|F|=|\ULC_\omega(\langle F\rangle)|$.
The second statement concerns the special case when 
$\ULC_\omega(\langle F\rangle)=\clo{O}_A$. In this special case
$\ULC_\omega(\langle F\rangle)$ clearly contains a near unanimity operation
and 
$|\ULC_\omega(\langle F\rangle)|=|\clo{O}_A|=2^\nu$, so
$\ULC_\omega(\langle F\rangle)$ is uncountable. Therefore the main
statement yields the desired conclusion that
$|F|=|\ULC_\omega(\langle F\rangle)|=2^\nu$.

To prove the main statement, assume 
$\clo{C}:=\ULC_\omega(\langle F\rangle)$ is uncountable and $h$ is a
near unanimity operation in $\clo{C}$. In particular, we have that
$\clo{C}=\ULC_\omega(\clo{C})$
and 
$\langle F\cup\{h\}\rangle\subseteq\clo{C}$. Thus,
\[
\clo{C}=\ULC_\omega(\langle F\rangle)
\subseteq\ULC_\omega(\langle F\cup\{h\}\rangle)
\subseteq\ULC_\omega(\clo{C})=\clo{C},
\]
which implies that $\clo{C}=\ULC_\omega(\langle F\cup\{h\}\rangle)$.
However, since the clone $\langle F\cup\{h\}\rangle$ contains a
near unanimity operation, we know from Theorem~\ref{thm-nu} that
it is ultralocally closed, that is
$\ULC_\omega(\langle F\cup\{h\}\rangle)=\langle F\cup\{h\}\rangle$.
Hence, $\clo{C}=\langle F\cup\{h\}\rangle$, and therefore
$|\clo{C}|=|F|+\aleph_0$.
Now the assumption that $\clo{C}=\ULC_\omega(\langle F\rangle)$
is uncountable implies that $|F|=|\clo{C}|=|\ULC_\omega(\langle F\rangle)|$,
as claimed.
\end{proof}

\section*{Acknowledgments}
We thank Miguel Campercholi and Diego Casta{\~{n}}o
for correcting a mistake in an earlier version of this
article. We also thank the referee for many helpful comments, including
the question that led to Theorem~\ref{thm-referee} and the suggestions on
constructing large families of ultralocally closed clones, one of which is
reproduced in Remark~\ref{remrk-referee}.

\bibliographystyle{plain}

\end{document}